\newcommand{\R}{\mathbb{R}}
\newcommand{\E}{\mathbb{E}}
\newtheorem{thm}{Theorem}[section]
\newtheorem{prp}{Proposition}[section]
\newtheorem{lem}{Lemma}[section]
\newtheorem{definition}{Definition}[section]
\begin{document}
\title[Stopping criterion for constrained maximum likelihood algorithms]
{Regularization of constrained maximum likelihood iterative algorithms by means of statistical stopping rule}
\author{Federico Benvenuto and Michele Piana}
\address{DIMA, Universit\`a di Genova, Via Dodecaneso 35, 16100, Italy}
\ead{benvenuto@dima.unige.it}

\begin{abstract}
In this paper we propose a new statistical stopping rule for constrained maximum likelihood iterative algorithms applied to ill-posed inverse problems. To this aim we extend the definition of Tikhonov regularization in a statistical framework and prove that the application of the proposed stopping rule
to the Iterative Space Reconstruction Algorithm (ISRA) in the Gaussian case and Expectation Maximization (EM) in the Poisson case leads to well defined regularization methods according to the given definition. We also prove that, if an inverse problem is genuinely ill-posed in the sense of Tikhonov, the same definition is not satisfied when ISRA and EM are optimized by classical stopping rule like Morozov's discrepancy principle, Pearson's test and Poisson discrepancy principle. The stopping rule is illustrated in the case of image reconstruction from data recorded by the Reuven Ramaty High Energy Solar Spectroscopic Imager (RHESSI). First, by using a simulated image consisting of structures analogous to those of a real solar flare we validate the fidelity and accuracy with which the proposed stopping rule recovers the input image. Second, the robustness of the method is compared with the other classical stopping rules and its advantages are shown in the case of real data recorded by RHESSI during two different flaring events.
\end{abstract}

\maketitle

\section{Introduction}

Maximum likelihood (ML) is a standard approach to parameter estimation in statistics, and provides estimates of the solution even for statistical inverse problem. Once the statistical distribution of data is known, its application is straightforward and is widely used in many different fields \cite{Tarantola:2004:IPT:1062404}, from astronomical image reconstruction to medical imaging where the data distribution is often Poisson; from geophysical to elastic waveforms inverse problem, where the data are mainly modeled by Gaussian noise.

ML estimates can be effectively constrained to be non negative. In the case of Poisson noise, the constrained ML approach leads to the Expectation Maximization (EM) algorithm \cite{EM}. In the case of white Gaussian noise, the constrained ML approach leads to the Iterative Space Reconstruction Algorithm (ISRA) \cite{ISRA}. Although both these algorithms have the convergence property, the properties of the limit solution have not yet been fully investigated. What is known is that such solution, in the case of experimental data, although non negative, is not acceptable from a physical viewpoint, since the intrinsic ill-posedness of the inverse problem induces noise amplification. There are two ways to regularize a statistical inverse problem: first, with an add of information on the solution realized by a prior probability in a Bayesian framework, or, second, without adding information on the solution but simply stopping the iterative approximation process before getting the limit solution \cite{MR726773}. 
This paper focuses on this second approach.

Among the many different stopping criteria available in literature, Morozov's discrepancy principle \cite{Morozov:105020}, Pearson's cumulative test \cite{Veklerov, herbert} and Generalized Cross validation \cite{Coakley,GCV} can be applied to rather general schemes, while the updating coefficients method \cite{Tzanakos} and the more recent Poisson discrepancy criterion \cite{bertero2010} have been specifically designed for EM.
Some of these criteria may not work when the inverse problem is genuinely ill-posed in a Tikhonov sense \cite{Tikhonov}, i.e. when the data do not belong to the range of the forward operator. The object of this paper is to introduce an optimal stopping rule for constrained iterative ML methods which is characterized by two properties. First, it is motivated by statistical arguments; second it works under completely general conditions (i.e. including the case of genuinely ill-posed problems).
We apply this criterion to EM and ISRA and we show its effectiveness in the case of both synthetic (but realistically simulated) and real data. Finally we prove that this stopping rule makes these two iterative algorithms well defined regularization methods, according to a regularization definition which extends the classical Tikhonov definition to a statistical framework.

In \Sref{sec:Constrained} we describe the constrained ML approach to a statistical inverse problem, showing how to derive a general iterative algorithm given the likelihood, and we complete this overview applying the method to the Gaussian and Poisson cases. In \Sref{sec:Regularization} we introduce a definition of regularization for constrained ML problems extending the Tikhonov one to a statistical framework and we also show that classical stopping rules satisfy this definition only when the problem is not genuinely ill-posed. In \Sref{sec:Criterion} we propose the new statistical criterion and we prove that it is a well-defined regularization method both in the Gaussian and the Poisson case. In \Sref{sec:Experiments} we perform some numerical applications with Gaussian and Poisson noise respectively, and we also show an application to a real case reconstructing solar X-ray images starting from count data collected by an on-orbit satellite.

\section{Constrained ML problem}
\label{sec:Constrained}

We denote by $x = \{x_j \}_{j=1,\ldots,M}$ the unknown parameters, where $j$ is, in general, a multi-index. Moreover, we denote by $y = \{y_i\}_{i=1,\ldots,N}$ the detected signal, where $i$ can be in general again a multi-index. $N$ and $M$ are, respectively, the number of data
and unknown parameters of the reconstruction problem. Finally, let the relation between unknown parameters and the data be described by a linear system.
We denote by $H$ the matrix describing the transformation from the parameter space to the signal space. We suppose each element of $H$ is positive, i.e.
\begin{equation}
H_{ij} > 0 ~~~ ~~~ \forall ~ i=1,\ldots,N, ~ j=1,\ldots,M ~.
\end{equation}
Then, the forward model can be written in the form
\begin{equation}
y=Hx ~~~ .
\label{model}
\end{equation}

A standard statistical approach for estimating the parameter $x$ given $y$, is the Maximum Likelihood (ML) method. It is based on the assumption that the data vector $y$ is an observed value of random vector $Y$ with mean $Hx$. In other words, denoting by $\eta$ a generic random process, we can write
\begin{equation}
Y_i \sim \eta_i (Hx) ~~~ .
\end{equation}
The density function of the vector $Y$ is given by the joint probability density function $p_\eta(y,Hx)$. When this density is thought of as a function of $x$ given $y$ we call it the likelihood and we write
\begin{equation}
\mathcal L_y(x) = p_\eta(y,Hx) ~~~ .
\end{equation}

Once the matrix $H$ and the data $y$ are given, ML obtains the solution of $x$ when the likelihood reaches its maximum value. However, in the majority of cases physical motivations regarding the nature of the problem imply that the components of parameter $x$ has to be non-negative. Consequently, the ML estimator is constrained as
\begin{equation}
\hat x = \arg \max_{x\in \mathcal C} ~ \mathcal{L}_y(x) ~~~ ,
\label{ML_problem}
\end{equation}
where $\mathcal C=\{x ~|~ x_j \geq 0 ~,~\forall~ j=1,\ldots,M\}$
is the non-negative orthant.

Usually, it is more convenient to minimize the negative logarithm of the likelihood instead of maximizing it. The constrained ML problem is therefore equivalent to 
\begin{equation}
\hat x = \arg \min_{x\in \mathcal C} ~ L_y(x) ~~~,
\label{neglog_problem}
\end{equation}
where $L_y(x)=-\log(\mathcal L_y(x))$. Indeed, when both the negative logarithm of the likelihood function and the constraint are convex, the necessary and sufficient conditions for $x$ to be the constrained ML estimator are the Karush-Khun-Tucker (KKT) conditions \cite{Boyd}, which in this particular case take the form
\begin{equation}
x ~ \nabla L_y(x) = 0 ~~~ , ~~~ \mathrm{with} ~~~ x \geq 0
\label{KKT}
\end{equation}
where multiplication and inequality between vectors are done element-by-element.

Splitting the gradient into the positive part $V(x)$ and the negative part $U(x)$ transforms \eref{KKT} into a fixed point equation \cite{lanteri2002}; then applying the successive approximation method leads to the multiplicative iterative algorithm
\begin{equation}
x^{(k+1)} = x^{(k)} \frac{U(x^{(k)})}{V(x^{(k)})} ~~~ .
\label{iterative_approx}
\end{equation}
Even if there is no general proof of convergence for these algorithms, in the particular convex cases we will discuss this technique leads to well-known algorithms for which proof of convergence has been done. It has been conjectured \cite{night_sky_conjecture} that the minimizer of the constrained ML problem should be sparse in a pixel space, but the problem of giving a parametric form of this solution is still open. Moreover, in \cite{0266-5611-26-2-025004} authors have shown with a numerical experiment that this conjecture is verified when the data are perturbed by Gaussian noise.

When $\eta$ is a vector of independent and identically distributed Gaussian variables with mean given by $Hx$ and variance equal to $\sigma^2$, the constrained ML problem \eref{ML_problem} is equivalent to the minimization of the Least Squares function
\begin{equation}
D_{LS}(y,x) = \| Hx - y\|^2 ~~~ ,
\label{LS}
\end{equation}
under the non-negativity constraint \cite{bertero1998book}. Consequently, KKT conditions \eref{KKT} lead to the algorithm
\begin{equation}
x^{(k+1)} = x^{(k)} \frac{H^T y}{H^T Hx^{(k)}} ~~~ ,
\label{ISRA}
\end{equation}
which is known as Iterative Space Reconstruction Algorithm (ISRA), it has been introduced \cite{ISRA} as an acceleration of EM and it is convergent to the constrained minimum of $D_{LS}$ \cite{depierro}.

Analogously, when $\eta$ is a vector of independent and identically distributed Poisson variables with parameter given by $Hx$, the negative logarithm of the likelihood \eref{neglog_problem} is equivalent to the Kullback Leibler divergence
\begin{equation}
D_{KL}(y,x) = \sum_{i=1}^N  y_i \log \frac{y_i}{(Hx)_i} + (Hx)_i - y_i ~~~ .  
\label{KL}
\end{equation}
Hence, the constrained ML problem \eref{ML_problem} is equivalent to the minimization of $D_{KL}$ under the non-negativity constraint \cite{bertero1998book}. In the Poisson case the KKT conditions \eref{KKT} lead to the following iterative algorithm
\begin{equation}
x^{(k+1)} = \frac{ x^{(k)} }{H^T 1} H^T \frac{y}{Hx^{(k)}} ~~~ ,
\label{em_alg}
\end{equation}
which is known as Expectation Maximization \cite{EM}, or also as Richardson Lucy algorithm \cite{RICHARDSON:72,lucy} when $H$ represents a convolution operator and it is convergent to the constrained minimum of $D_{KL}$ \cite{Shepp}.

The limit solutions reached by EM and ISRA satisfy two analogous properties. In fact, let us first observe that, if
\begin{equation}
H(\mathcal C) := \{ ~ y' \in \R^N ~ | ~ y' = \sum_{j=1}^M a_j H_{ij} ~,~ a_j \geq 0 ~ \} ~~~ ,
\end{equation}
then
\begin{lem}
\label{ML_minimum}
The minimum of the functions $D_{LS}$ and $D_{KL}$ is zero if and only if $y \in H(\mathcal C)$.
\end{lem}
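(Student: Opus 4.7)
The plan is to prove each direction separately for both functionals, leveraging that both $D_{LS}$ and $D_{KL}$ are pointwise non-negative. First I would verify this non-negativity: for $D_{LS}$ it is immediate, while for $D_{KL}$ I would show each summand $\phi_i(x) := y_i \log(y_i/(Hx)_i) + (Hx)_i - y_i$ satisfies $\phi_i(x) \geq 0$, with equality if and only if $(Hx)_i = y_i$. This is the standard Gibbs-type inequality $u\log(u/v) \geq u - v$, proved by applying $\log t \leq t-1$ to $t = (Hx)_i/y_i$, with the conventions $0\log 0 = 0$ and $\phi_i = +\infty$ whenever $y_i > 0$ and $(Hx)_i = 0$.

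Next I would dispatch the easy ($\Leftarrow$) direction. If $y \in H(\mathcal{C})$, then by definition there exists $a \in \mathcal{C}$ with $y = Ha$; substituting $x = a$ yields $D_{LS}(y,a) = \|Ha - y\|^2 = 0$ and $D_{KL}(y,a) = \sum_i y_i \log 1 + y_i - y_i = 0$ directly. Combined with non-negativity, this means the minima are indeed attained at zero.

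For the ($\Rightarrow$) direction I would argue as follows. In the Gaussian case, if $\min_{x \in \mathcal{C}} \|Hx - y\|^2 = 0$, then the infimum is attained (since $H(\mathcal{C})$ is a finitely generated, hence closed, convex cone), so there exists $\hat x \in \mathcal{C}$ with $H\hat x = y$, showing $y \in H(\mathcal{C})$. In the Poisson case, if $\min_{x \in \mathcal{C}} D_{KL}(y,x) = 0$, then at any minimizer $\hat x$ the summand-wise non-negativity forces $\phi_i(\hat x) = 0$ for every $i$; by the equality condition above, $(H\hat x)_i = y_i$ for all $i$, hence $y = H\hat x \in H(\mathcal{C})$.

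The main technical obstacle I anticipate is the careful handling of the boundary cases in the $D_{KL}$ analysis, specifically the convention $0 \log 0 = 0$ when $y_i = 0$ (the summand collapses to $(Hx)_i \geq 0$) and the blow-up $\phi_i = +\infty$ when $y_i > 0$ but $(Hx)_i = 0$ (which automatically prevents the minimum from being zero at such $x$). Once these cases are addressed, the argument reduces to the scalar inequality $u\log(u/v) + v - u \geq 0$ with its equality condition, which is a one-line consequence of the strict convexity of $-\log$.
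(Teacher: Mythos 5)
Your proof is correct and follows essentially the same route as the paper's: non-negativity of both functionals combined with the observation that the zero value is attained exactly when $Hx = y$ for some $x \in \mathcal C$, i.e.\ precisely when $y \in H(\mathcal C)$. You supply details the paper treats as evident --- the Gibbs-type inequality with its equality case for $D_{KL}$, the boundary conventions, and attainment of the infimum via closedness of the finitely generated cone --- but the underlying argument is the same.
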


\begin{proof}
We start by remarking that $D_{LS}$ and $D_{KL}$ are nonnegative functions. So if they assume the zero value, this is the minimum. For both $D_{LS}$ and $D_{KL}$ it is evident that the zero value is assumed if and only if $Hx=y$. Therefore, the minimum of the functions $D_{LS}$ and $D_{KL}$
is zero if and only if there exists at least one point $\bar x \in \mathcal C$ such that $H\bar x = y$. Such a point exists if and only if the data $y$ belongs to $H(\mathcal C)$.
\end{proof}

Since EM and ISRA converge to a minimum of the functionals $D_{KL}$ and $D_{LS}$ respectively, Lemma \ref{ML_minimum} implies the following:
\begin{prp}
\label{zero_conv}
If $y \not\in H(\mathcal C)$ then the algorithms EM and ISRA converge to a solution $x^{(\infty)}$ such that
\begin{equation}
D_{KL} (y,x_{EM}^{(\infty)}) > 0 ~~~ \mathrm{and} ~~~
D_{LS} (y,x_{ISRA}^{(\infty)}) > 0
\end{equation}
respectively. 
\end{prp}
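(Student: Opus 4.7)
The plan is to deduce Proposition \ref{zero_conv} almost immediately from Lemma \ref{ML_minimum} together with the two convergence results already cited in the excerpt (\cite{depierro} for ISRA and \cite{Shepp} for EM). There is essentially no new analytic work: the proposition is the contrapositive of the ``if'' direction of Lemma \ref{ML_minimum} applied at the limit point of each iteration.

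First, I would invoke the convergence statements. For ISRA the sequence $x^{(k)}_{ISRA}$ produced by \eref{ISRA} converges to a point $x^{(\infty)}_{ISRA}\in\mathcal C$ which minimises $D_{LS}(y,\cdot)$ over $\mathcal C$; likewise for EM the sequence generated by \eref{em_alg} converges to a point $x^{(\infty)}_{EM}\in\mathcal C$ which minimises $D_{KL}(y,\cdot)$ over $\mathcal C$. Thus the quantities $D_{LS}(y,x^{(\infty)}_{ISRA})$ and $D_{KL}(y,x^{(\infty)}_{EM})$ coincide with the constrained minima of the two discrepancy functionals.

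Second, I would apply Lemma \ref{ML_minimum} in its contrapositive form: if $y\notin H(\mathcal C)$, then the constrained minimum of $D_{LS}(y,\cdot)$ (resp.\ $D_{KL}(y,\cdot)$) over $\mathcal C$ cannot be zero, and since both functionals are nonnegative this minimum must be strictly positive. Combining this with the previous step gives $D_{LS}(y,x^{(\infty)}_{ISRA})>0$ and $D_{KL}(y,x^{(\infty)}_{EM})>0$, which is exactly the claim.

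There is no real obstacle here; the only point to be a bit careful about is that convergence of the iterates $x^{(k)}$ really does entail convergence of the functional values to the constrained minimum. For $D_{LS}$ and $D_{KL}$ this is immediate by continuity of the functionals on the relevant domain (one should note that $D_{KL}$ is continuous at $x^{(\infty)}_{EM}$ because $H$ has strictly positive entries and $x^{(\infty)}_{EM}\neq 0$, so $(Hx^{(\infty)}_{EM})_i>0$ whenever any component of $x^{(\infty)}_{EM}$ is positive; the trivial case $y=0$ is already excluded by the hypothesis $y\notin H(\mathcal C)$). Modulo this brief continuity remark, the proposition is a one-line corollary of Lemma \ref{ML_minimum}.
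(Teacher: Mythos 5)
Your proposal is correct and follows exactly the route the paper takes: the proposition is presented there as an immediate consequence of Lemma \ref{ML_minimum} combined with the cited convergence of ISRA and EM to the constrained minimizers of $D_{LS}$ and $D_{KL}$. Your additional continuity remark is a reasonable (and harmless) extra precaution that the paper omits.
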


\section{Regularization for a constrained ML problem}
\label{sec:Regularization}

The noise corrupting the data $y$ makes the constrained ML solution not physically acceptable. In this case one can get an estimation of the unknown signal exploiting regularization. However, the definition of regularization given by Tikhonov \cite{Tikhonov} does not explicitly use statistical concepts, but takes place in the framework of functional analysis where the noise is modeled as a generic perturbation of the data in a metric space,
instead of as a random variable. In order to provide a statistical definition of regularization we utilize the concept of coefficient of variation \cite{DeGroot}.

Given a random variable $\mathcal N$ with mean $\mu$ and standard deviation $\sigma$, the coefficient of variation is defined as the ratio $\delta = \sigma / \mu$, which is an inversely proportional measure of the signal to noise ratio. Let us denote with $y_\delta=\mathcal N_\sigma (\mu)$
a realization of the random variable of a given coefficient of variation $\delta$, and let the mean $\mu$ be modeled by the action of a linear operator
on the unknown parameter $x$, i.e. $\mu=Hx$.

\begin{definition}
\label{stat_reg_def}
When $\mathcal L(x,y_\delta)$ is the likelihood associated with $y_\delta = \mathcal N_{\sigma} ( Hx )$ and $\mathcal C$ is a convex subset of $\R^M$,
an operator $R_\alpha: \Omega \subset \R^N \to \R^M$ is said to be a regularizing operator for the constrained ML problem
\begin{equation}
\label{con_max_lik}
\arg \max_{x\in \mathcal C} ~ \mathcal{L}(x,y_\delta)
\end{equation}
if the following conditions hold:
\begin{itemize}
\item [1)] $R_\alpha$ is defined on the range of $H$;
\item [2)] there exists a function $\alpha = \alpha(\delta)$ such that, when the coefficient of variation tends to zero, $R_{\alpha(\delta)}(y_\delta)$ tends to a constrained solution $\hat x$ of problem \eref{con_max_lik}, i.e.
\begin{equation}
\label{limit_cond}
\lim_{\delta \to 0} R_{\alpha(\delta)}(y_\delta) = \hat x.
\end{equation}
\end{itemize}
In the case of iterative methods, when an algorithm $x^{(k)}$ converges to a constrained ML solution $\hat x$, condition 2) can be restated as
\begin{itemize}
\item [2a)] there exists a function $k = k(\delta)$ such that,
\begin{equation}
\lim_{\delta \to 0} k(\delta) = \infty ~~~ .
\end{equation}
\end{itemize}
\end{definition}

We note that when $\mathcal N$ is a Gaussian variable, $H$ represents an invertible linear operator, and $\mathcal C = \R^N$, the constrained ML problem
becomes equivalent to the unconstrained least square problem, and so this definition coincides with the well-known Tikhonov definition (specifically, as $\delta$ tends to zero, the unconstrained ML solution $\hat x$ tends to the so-called generalized solution). When $\mathcal N$ is a Poisson variable the second statement of Definition \ref{stat_reg_def} requires that the solution provided by  the regularization operator $R_\alpha(y)$ tends to a constrained ML solution $\hat x$ as the mean of the Poisson variable tends to infinite. Hence the properties $2)$ and $2a)$ have to hold asymptotically.

As a first issue, we discuss the conditions when traditional stopping rules define a regularization algorithm in the sense of Definition \ref{stat_reg_def}. We consider the Morozov's discrepancy principle for the white Gaussian noise case and  three different rules for Poisson noise, i.e.
\begin{itemize}
 \item [i)] Adapted Morozov's discrepancy principle,
 \item [ii)] Pearson's cumulative test,
 \item [iii)] Poisson discrepancy criterion.
\end{itemize}
The most widely known and used criterion is the Morozov's discrepancy principle. It was first developed in the case of signal corrupted by Gaussian noise \cite{Morozov:105020}, but an adapted version can be restated for the Poisson case \cite{Bardsley,stagliano}. Pearson's cumulative test \cite{Veklerov,herbert} becomes the same as Morozov's discrepancy principle when the noise is white Gaussian, so it has been considered in the Poisson case. Poisson discrepancy criterion is a recently formulated stopping rule appropriate for Poisson noise \cite{bertero2010}. We will show now that only if $y \in H(\mathcal C)$ these stopping rules satisfy Definition \ref{stat_reg_def} with the consequence that, only when that condition is satisfied, ISRA and EM supplied with one of these stopping rules become well-defined regularization methods.

\subsection{Gaussian case}

Given the ISRA iterative process, Morozov's discrepancy principle says that a reliable estimate of the solution is obtained by choosing the first iteration $k$ such that
\begin{equation}
\| Hx^{(k)} -y \|^2 \leq \tau N \sigma^2 ~~~ ,
\label{Morozov_gauss}
\end{equation}
for some fixed $\tau>0$. For this rule the following proposition holds true.
\begin{thm}
\label{ISRA_Morozov_stop_rule}
ISRA supplied with the Morozov's discrepancy principle becomes a well-defined regularization method if and only if $y \in H(\mathcal C)$.
\end{thm}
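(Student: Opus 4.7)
The plan is to prove both directions by monitoring the residual $\|Hx^{(k)}-y\|^2 = D_{LS}(y,x^{(k)})$ along the ISRA iterates in relation to the Morozov threshold $\tau N \sigma^2$, which tends to zero as $\delta \to 0$. The key ingredients will be Lemma~\ref{ML_minimum}, characterizing when the constrained minimum of $D_{LS}(y,\cdot)$ is zero, and Proposition~\ref{zero_conv}, which furnishes a strictly positive lower bound on the residual when $y \notin H(\mathcal{C})$, both combined with the convergence of ISRA to a constrained minimizer of $D_{LS}$.

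For the sufficiency direction I assume $y \in H(\mathcal{C})$. Lemma~\ref{ML_minimum} then gives $\min_{x\in\mathcal{C}} D_{LS}(y,x) = 0$, so the ISRA residual satisfies $\|Hx^{(k)}-y\|^2 \to 0$ as $k\to\infty$. Consequently, for every $\sigma>0$ the Morozov inequality \eref{Morozov_gauss} is eventually met, selecting a finite stopping index $k(\delta)$. As $\delta\to 0$, one has $\tau N \sigma^2 \to 0$; since the residual attains zero only in the limit $k\to\infty$ (apart from the degenerate case of exact recovery after finitely many iterations, addressed below), it follows that $k(\delta) \to \infty$, which is precisely condition 2a) of Definition~\ref{stat_reg_def}, so ISRA supplemented with the Morozov principle is a regularization method.

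For the necessity direction I assume $y \notin H(\mathcal{C})$. Proposition~\ref{zero_conv} then gives $\varepsilon := D_{LS}(y,x^{(\infty)}_{ISRA}) > 0$, and since $x^{(\infty)}_{ISRA}$ is the constrained minimum attained in the limit, every iterate obeys $\|Hx^{(k)}-y\|^2 \geq \varepsilon$. Taking $\delta$ small enough that $\tau N \sigma^2 < \varepsilon$, the Morozov inequality \eref{Morozov_gauss} is never satisfied, no finite stopping index $k(\delta)$ can be defined, and condition 2) of Definition~\ref{stat_reg_def} fails, so ISRA with Morozov's rule is not a regularization method. The only delicate point I expect is the degenerate sufficiency case in which ISRA reaches the constrained minimum at some finite step, leaving $k(\delta)$ bounded as $\delta \to 0$; in that situation the iterate already equals $\hat x$, so condition 2) of Definition~\ref{stat_reg_def} still holds even though its restatement 2a) does not literally apply.
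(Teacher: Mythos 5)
Your proof is correct and follows essentially the same route as the paper's: both reduce the matter to whether the ISRA residual $\|Hx^{(k)}-y\|^2$ can fall below the Morozov threshold $\tau N\sigma^2$ as $\sigma\to 0$, invoking Lemma~\ref{ML_minimum} (equivalently Proposition~\ref{zero_conv}) to decide this according to whether $y\in H(\mathcal C)$. The paper's version is a two-line sketch that sets $\sigma=0$ directly, so your explicit treatment of the two directions and of the degenerate finite-termination case is simply a more careful writing of the same argument.
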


\begin{proof}
We have to verify the second condition of Definition \ref{stat_reg_def}. When $\sigma = 0$, the stopping criterion is equivalent to the requirement that ISRA converges to the zero of the constrained LS functional. From Lemma \ref{ML_minimum} this follows if and only if $y \in H(\mathcal C)$.
\end{proof}

\subsection{Poisson case}

We first introduce two general properties of EM. The first one can be summarized as follows: EM produces scaled reconstructions when the input data are scaled. In fact,
\begin{lem}
Given a data $y$, let us consider a scalar $L>0$ and the corresponding scaled data $y_L=Ly$. Let $x^{(k)}$ indicate the $k$-th EM iteration with entry data $y$ as in \eref{em_alg}, and $x_L^{(k)}$ indicates the $k$-th EM iteration with entry data $y_L$. The following relation holds true
\begin{equation}
x_L^{(k)} = L x^{(k)} ~~~ .
\end{equation}
\label{em_scaling}
\end{lem}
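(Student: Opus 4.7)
The plan is to prove this by straightforward induction on $k$, exploiting the multiplicative structure of the EM update in \eref{em_alg}. The key observation is that in the ratio $y/(Hx^{(k)})$, any overall scaling of the data propagates identically to the denominator through the factor $H x^{(k)}$, so the two scalings cancel; the net factor of $L$ that survives comes from the multiplicative prefactor $x^{(k)}$ in front.

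First, I would fix the base case. The natural (and implicit) assumption is that the two iterative processes share an initialization consistent with the scaling, namely $x_L^{(0)} = L x^{(0)}$; this is the canonical choice because standard EM initializations are proportional to the total counts, and in any case the statement for $k \geq 1$ depends on starting from compatible initial conditions. I would state this explicitly at the top of the proof.

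Next comes the inductive step. Assuming $x_L^{(k)} = L x^{(k)}$, I substitute into \eref{em_alg} applied to $y_L = L y$:
\begin{equation}
x_L^{(k+1)} = \frac{x_L^{(k)}}{H^T 1} \, H^T \frac{y_L}{H x_L^{(k)}} = \frac{L x^{(k)}}{H^T 1} \, H^T \frac{L y}{L\,H x^{(k)}} = L \, \frac{x^{(k)}}{H^T 1} \, H^T \frac{y}{H x^{(k)}} = L\, x^{(k+1)},
\end{equation}
where the two factors of $L$ in the ratio $Ly / (L H x^{(k)})$ cancel element-by-element, and the remaining $L$ is carried by the prefactor $x_L^{(k)} = L x^{(k)}$.

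There is no genuine obstacle here; the only subtlety worth flagging is that the multiplication and division in \eref{em_alg} are componentwise, so the cancellation of $L$ in $Ly/(L H x^{(k)})$ is entrywise and does not require any invertibility of $H$. I would conclude by noting that the same argument would fail for an additive update of the form $x^{(k+1)} = x^{(k)} + \text{something}(y,x^{(k)})$, which emphasizes that this scaling invariance is an intrinsic feature of the multiplicative form of EM.
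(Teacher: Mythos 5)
Your proof is correct and rests on the same mechanism as the paper's: the scalar $L$ cancels elementwise in the ratio $y_L/(Hx_L^{(k)})$ and survives only through the multiplicative prefactor, so induction closes immediately. The one place you diverge is the base case. You postulate scaled initializations, $x_L^{(0)}=Lx^{(0)}$, and run the induction from $k=0$. The paper instead starts \emph{both} runs from the same, data-independent $x^{(0)}$ and observes that the factor $L$ appears automatically at the first step: since $Hx^{(0)}$ is common to both runs, $x_L^{(1)}=\frac{x^{(0)}}{H^T1}H^T\frac{Ly}{Hx^{(0)}}=Lx^{(1)}$, after which your inductive step takes over for $k\ge 1$. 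The paper's convention is the more faithful one for how EM is actually initialized (a fixed vector, e.g. all ones, chosen independently of the data), and it has the advantage of requiring no extra hypothesis beyond ``same initialization''; your version imports an assumption that is not in the lemma statement, though it is harmless because both conventions agree from $k=1$ onward, which is all that the later theorems use. Your closing remarks --- that the cancellation is entrywise and needs no invertibility of $H$, and that the invariance is specific to the multiplicative form of the update --- are accurate and worth keeping.
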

\begin{proof}
Let $x^{(0)}$ the algorithm initialization. At the first iteration, with entry data $y_L$, we have
\begin{equation}
x_L^{(1)} = \frac{ x^{(0)} }{H^T 1} H^T \frac{Ly}{Hx^{(0)}} = L x^{(1)} ~~~ .
\end{equation}
From the second iteration onwards ($k \geq 1$) we have
\begin{equation}
x_L^{(k+1)} = \frac{ Lx^{(k)} }{H^T 1} H^T \frac{Ly}{L Hx^{(k)}} = L x^{(k+1)} ~~~ ,
\end{equation}
and hence the thesis holds true.
\end{proof}
We will also use the following well-known property of EM:
\begin{lem}
\label{flux}
For each iteration $k$ the relation  
\begin{equation}
\sum_{i=1}^N (Hx^{(k)})_i = \sum_{i=1}^N y_i 
\end{equation}
holds.
\end{lem}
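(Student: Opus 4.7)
The plan is to prove this by induction on $k$, using the explicit form of the EM update. The base case needs a word of caution: the identity only holds from $k=1$ onward (or for $k=0$ only if the initial iterate $x^{(0)}$ is chosen to satisfy it). So the real content is the inductive step, which is a direct calculation that boils down to swapping the order of summation.

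First I would rewrite \eref{em_alg} componentwise as
\begin{equation}
x_j^{(k+1)} = \frac{x_j^{(k)}}{(H^T 1)_j} \sum_{i=1}^N H_{ij}\, \frac{y_i}{(Hx^{(k)})_i},
\end{equation}
which makes explicit that the denominator $(H^T 1)_j = \sum_{\ell=1}^N H_{\ell j}$ is exactly the column sum of $H$. This is the key structural fact that drives the proof: the EM update is engineered so that multiplication by $H$ on the left and summation over the data index restores the scaling.

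Next I would compute $\sum_{\ell=1}^N (Hx^{(k+1)})_\ell$. Writing it as $\sum_\ell \sum_j H_{\ell j} x_j^{(k+1)}$ and substituting the formula above, I would interchange the order of summation over $\ell$ and $j$ so that $\sum_\ell H_{\ell j}$ appears and cancels the denominator $(H^T 1)_j$. What remains is
\begin{equation}
\sum_j x_j^{(k)} \sum_i H_{ij}\, \frac{y_i}{(Hx^{(k)})_i}.
\end{equation}
Swapping the order once more and pulling $y_i / (Hx^{(k)})_i$ outside the inner sum, the factor $\sum_j H_{ij} x_j^{(k)} = (Hx^{(k)})_i$ cancels the denominator, leaving $\sum_i y_i$. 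This establishes the claim for every $k \geq 1$ regardless of initialization.

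I do not expect any genuine obstacle here; the proof is a two-line double sum manipulation once one notices how the column sums $(H^T 1)_j$ are placed in the denominator precisely to make this conservation property hold. The only subtlety worth mentioning is the caveat about $k=0$ (a standard choice like $x^{(0)}$ constant automatically enforces the conservation at the first step via the computation above, so the property holds from $k \geq 1$ in full generality).
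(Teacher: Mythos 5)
Your proof is correct and is essentially the paper's own argument: the paper performs exactly the same cancellation, just written compactly with inner products as $\langle Hx^{(k)},1\rangle=\langle x^{(k)},H^{T}1\rangle$, whereupon the $H^{T}1$ in the denominator of the update cancels and $\langle y/Hx^{(k-1)},Hx^{(k-1)}\rangle=\langle y,1\rangle$. Note that, like the paper's version, your computation is direct for each $k\geq 1$ and never actually invokes the inductive hypothesis, so the induction framing (and hence the $k=0$ caveat) is harmless but unnecessary.
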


\begin{proof}
The thesis follows directly from computations
\begin{eqnarray*}
\left< Hx^{(k)} ~,~ 1\right> &=& \left< \frac{ x^{(k-1)} }{H^T 1} H^T \frac{y}{Hx^{(k-1)}} ~,~ H^T 1 \right> \\
&=& \left<  \frac{y}{Hx^{(k-1)}} ~,~ Hx^{(k-1)} \right> = \left< ~ y ~,~ 1 ~\right>  ~~~ .
\end{eqnarray*}
\end{proof}

In the Poisson framework, the adapted version of Morozov's discrepancy principle says that a reliable estimate of the solution is obtained by choosing the first iteration $k$ such that
\begin{equation}
\| Hx^{(k)} -y \|^2 \leq \tau \sum_{i=1}^N (Hx^{(k)})_i ~~~,
\label{DP_poisson}
\end{equation}
where $x^{(k)}$ represents the EM iteration and $\tau$ is a fixed positive number.  

\begin{thm}
\label{EM_Morozov_stop_rule}
EM supplied with the adapted Morozov's discrepancy principle becomes a well-defined regularization method if and only if $y \in H(\mathcal C)$.
\end{thm}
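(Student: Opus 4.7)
The plan is to mirror the proof of Theorem \ref{ISRA_Morozov_stop_rule}, but with the Poisson coefficient-of-variation limit recast as a data-scaling limit via Lemma \ref{em_scaling}. Two Poisson-specific ingredients do most of the work: Lemma \ref{flux}, which collapses the iteration-dependent right-hand side of the criterion to an iteration-independent constant, and Lemma \ref{em_scaling}, which converts the $\delta\to 0$ limit into a concrete algebraic scaling.

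First I would rewrite the stopping criterion: by Lemma \ref{flux}, $\sum_i (Hx^{(k)})_i=\sum_i y_i$ for every $k$, so the adapted Morozov criterion reduces to $\|Hx^{(k)}-y\|^2\leq \tau\sum_{i=1}^N y_i$, with a right-hand side that is constant in $k$. Second, since the coefficient of variation of a Poisson variable of mean $\mu$ equals $1/\sqrt{\mu}$, the statistical limit $\delta\to 0$ corresponds to scaling the data by a factor $L\to\infty$. Applying Lemma \ref{em_scaling} to the scaled data $y_L=Ly$ gives $Hx_L^{(k)}=LHx^{(k)}$, so the criterion for $y_L$ is equivalent to
\begin{equation*}
\|Hx^{(k)}-y\|^2\leq \frac{\tau\sum_{i=1}^N y_i}{L},
\end{equation*}
i.e.\ $k(\delta)$ is the first iteration at which the unscaled residual falls below an $L$-dependent tolerance that vanishes as $L\to\infty$.

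Finally I would invoke Lemma \ref{ML_minimum}. Since EM converges to a constrained minimum $\hat{x}$ of $D_{KL}$ and $D_{KL}(y,\hat{x})=0$ if and only if $y\in H(\mathcal{C})$, the residual $\|Hx^{(k)}-y\|^2$ tends to $0$ if and only if $y\in H(\mathcal{C})$. Hence, if $y\in H(\mathcal{C})$ then for every $L$ a finite stopping index $k(L)$ exists and necessarily diverges as $L\to\infty$, verifying condition 2a) of Definition \ref{stat_reg_def}; if instead $y\notin H(\mathcal{C})$, the residual is bounded below by a positive constant (the squared EM-limit residual), so for $L$ sufficiently large no iterate satisfies the criterion and no well-defined stopping rule exists.

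The main obstacle I foresee is essentially bookkeeping: correctly identifying the statistical limit $\delta\to 0$ with the data-scaling limit $L\to\infty$ so that the Morozov tolerance on the scaled problem becomes the right $L$-dependent upper bound on the unscaled residual. Once this identification is made, the proof reduces to the same dichotomy as in the Gaussian case and is closed by a direct application of Lemma \ref{ML_minimum}, with the small additional observation that, although EM minimizes $D_{KL}$ rather than $D_{LS}$, both functionals vanish at a common point if and only if $Hx=y$, so Lemma \ref{ML_minimum} controls $\|Hx^{(k)}-y\|^2$ as needed.
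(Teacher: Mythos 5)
Your proposal is correct and follows essentially the same route as the paper: the same reduction of the $\delta\to 0$ limit to the $L\to\infty$ scaling limit, the same use of Lemma \ref{em_scaling} (together with Lemma \ref{flux}) to turn the criterion into $\|Hx^{(k)}-y\|^2\leq \frac{\tau}{L}\sum_i y_i$, and the same appeal to Lemma \ref{ML_minimum} plus EM convergence to settle both directions of the equivalence. Your explicit treatment of the ``only if'' direction and the remark that $D_{KL}$ and $D_{LS}$ vanish at the same points are slightly more detailed than the paper's wording, but not a different argument.
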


\begin{proof}
We have to prove that the two conditions of Definition \ref{stat_reg_def} are satisfied. The first one is obvious. For the second one, we recall that
given a Poisson random variable with coefficient of variation $\delta$, for any realization $y_\delta$ the component $(y_\delta)_i$ tends to $\infty$
as $\delta \to 0$ and hence $\lim_{\delta \to 0} \sum_{i=1}^N (y_\delta)_i = \infty$. As a consequence of that, when $k$ can be written as a function of some parameter proportional to $\sum_{i=1}^N (y_\delta)_i$, i.e. $k=k(L)$, with $L= \gamma \sum_{i=1}^N (y_\delta)_i$, $\gamma >0$, we can rewrite condition $2a)$ in Definition \ref{stat_reg_def} as
\begin{equation}
\label{stop_rule_cond}
\lim_{L \to \infty} k(L) = \infty ~~~ .
\end{equation}

Now, in order to prove the thesis we will show that $k$ can be written as $k(L)$ and we will verify condition \eref{stop_rule_cond}. Let $x^{(k)}$ be the $k$-th EM iteration \eref{em_alg} with data entry $y$ and let $x_L^{(k)}$ be the $k$-th EM iteration with data entry $y_L=Ly$ with $L>0$. The adapted Morozov's discrepancy principle for $x_L^{(k)}$ consists in finding the first iteration $k$ such that
\begin{equation}
\| Hx_L^{(k)} -y_L \|^2 \leq \tau \sum_{i=1}^N (y_L)_i ~~~ .
\end{equation}
This relation can be written in terms of $y$ and the corresponding EM iteration $x^{(k)}$ using Lemma \ref{em_scaling} as
\begin{equation}
\| Hx^{(k)} -y \|^2 \leq \frac{\tau}{L} \sum_{i=1}^N y_i ~~~ .
\end{equation}
For $L \to \infty$ we have
\begin{equation}
\| Hx^{(k)} -y \|^2 \leq 0 ~~~ .
\label{morozov_infinity}
\end{equation}
From Lemma \ref{ML_minimum} and the convergence property of EM, the l.h.s. of \eref{morozov_infinity} converges to zero as $k$ tends to $\infty$ if and only if $y \in H(\mathcal{C})$. Therefore condition \eref{stop_rule_cond} is satisfied and the thesis is proved.
\end{proof}

Given the EM iterative process, Pearson's cumulative test says that a reliable estimate of the solution is obtained by choosing the first iteration $k$ such that
\begin{equation}
\sum_{i=1}^N \frac{(Hx-y)^2_i}{(Hx)_i} \leq \tau~N~~~ ,
\label{chi_square}
\end{equation}
where $N$ represents the number of data points, as discussed in \cite{puetter}, and $\tau$ is a fixed positive number.

\begin{thm}
\label{EM_Pearson_stop_rule}
EM supplied with the Pearson's cumulative test becomes a well-defined regularization method
if and only if $y \in H(\mathcal C)$.
\end{thm}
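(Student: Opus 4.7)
The plan is to mirror the structure of the proof of Theorem \ref{EM_Morozov_stop_rule}, exploiting the same coefficient-of-variation-to-scaling translation that worked for the adapted Morozov principle. As in that theorem, condition 1) of Definition \ref{stat_reg_def} is immediate since $R_\alpha$ is clearly defined on the range of $H$, so the content of the argument is verifying condition 2a).

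First I would recast the asymptotic $\delta \to 0$ regime in terms of a scaling parameter $L$. Since for Poisson data with mean $\mu$ the coefficient of variation is $1/\sqrt{\mu}$, sending $\delta$ to zero is equivalent to sending $L = \gamma \sum_i (y_\delta)_i \to \infty$, and replacing $y$ by $y_L = L y$. Exactly as in the Morozov case, it therefore suffices to show that the iteration index $k=k(L)$ selected by the Pearson test satisfies $\lim_{L\to\infty} k(L)=\infty$ precisely when $y\in H(\mathcal C)$.

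Next I would translate the Pearson test from $(y_L, x_L^{(k)})$ back to $(y, x^{(k)})$ using Lemma \ref{em_scaling}. Since $x_L^{(k)} = L x^{(k)}$ we have $(Hx_L^{(k)})_i = L (Hx^{(k)})_i$, so the inequality
\begin{equation}
\sum_{i=1}^N \frac{(Hx_L^{(k)}-y_L)_i^2}{(Hx_L^{(k)})_i} \leq \tau N
\end{equation}
becomes, after pulling out a factor of $L^2/L = L$ from the numerator and denominator,
\begin{equation}
\sum_{i=1}^N \frac{(Hx^{(k)}-y)_i^2}{(Hx^{(k)})_i} \leq \frac{\tau N}{L} ~~~ .
\end{equation}
Letting $L \to \infty$, the threshold collapses to zero, so the asymptotic stopping condition reads $\sum_i (Hx^{(k)}-y)_i^2 / (Hx^{(k)})_i \leq 0$. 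Since each summand is non-negative, this is equivalent to requiring $Hx^{(k)} = y$.

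Finally I would close the argument by invoking Lemma \ref{ML_minimum} together with the convergence of EM to the constrained minimizer of $D_{KL}$: the iterates $Hx^{(k)}$ approach $y$ as $k\to\infty$ if and only if $y \in H(\mathcal C)$. Consequently, if $y \in H(\mathcal C)$ then for each $L$ the first index $k(L)$ satisfying the Pearson inequality exists and grows unboundedly with $L$, verifying \eref{stop_rule_cond}; conversely, if $y \notin H(\mathcal C)$, Proposition \ref{zero_conv} implies the Pearson residual stays bounded away from zero so the stopping rule fails to define $k(\delta)$ in the limit. The only potential pitfall I foresee is being careful that $(Hx^{(k)})_i$ is strictly positive so the Pearson expression is well defined, which is guaranteed by $H_{ij}>0$ and the non-negativity of the EM iterates; beyond that the calculation is entirely parallel to Theorem \ref{EM_Morozov_stop_rule}.
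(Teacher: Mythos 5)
Your proposal is correct and follows essentially the same route as the paper: the paper likewise uses Lemma \ref{em_scaling} to rewrite the Pearson test for the scaled data $y_L=Ly$ as $\sum_{i=1}^N (Hx^{(k)}-y)_i^2/(Hx^{(k)})_i \leq \tau N/L$ and then concludes exactly as in Theorem \ref{EM_Morozov_stop_rule}. The details you spell out (letting $L\to\infty$, invoking Lemma \ref{ML_minimum}, EM convergence, and Proposition \ref{zero_conv} for the converse) are precisely the steps the paper delegates to the earlier theorem.
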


\begin{proof}
Let $x^{(k)}$ be the EM iteration with data entry $y$. 
By taking $y_L = Ly$ with $L>0$ the stopping criterion for EM applied
to the data $y_L$ takes the form
\begin{equation}
\sum_{i=1}^N \frac{(Hx^{(k)}-y)^2_i}{(Hx^{(k)})_i} \leq \tau~\frac{N}{L}
\end{equation}
and the thesis follows as in Theorem \ref{EM_Morozov_stop_rule}.
\end{proof}

Given the EM iterative process, Poisson discrepancy criterion says that
a reliable estimate of the solution is obtained by choosing the first iteration $k$ such that
\begin{equation}
\frac{2}{N} \sum_{i=1}^N  y_i \log \frac{y_i}{(Hx^{(k)})_i} + (Hx^{(k)})_i - y_i \leq \tau~~~,
\end{equation}
for some positive number $\tau$. Poisson discrepancy criterion with $\tau=1$ has been proposed both for choosing the regularization parameter in an EM scheme with penalty term and for stopping the unpenalized EM iterative process \cite{bertero2010}.

\begin{thm}
\label{EM_Poisson_stop_rule}
EM supplied with the Poisson discrepancy criterion becomes a well-defined regularization method if and only if $y \in H(\mathcal C)$.
\end{thm}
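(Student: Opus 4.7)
The plan is to mimic the two preceding theorems (Morozov and Pearson) by reducing the stopping criterion for scaled data $y_L = L y$ to a bound on $D_{KL}(y, x^{(k)})$, and then invoke \Lref{ML_minimum} together with the convergence property of EM. The key algebraic observation is that the Poisson discrepancy functional is exactly $\frac{2}{N} D_{KL}(y, Hx^{(k)})$, so positive homogeneity of $D_{KL}$ under joint rescaling of both arguments becomes the essential tool.

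First I would introduce the scaled data $y_L = L y$, write the corresponding EM iterate as $x_L^{(k)} = L x^{(k)}$ by \Lref{em_scaling}, and substitute into the Poisson discrepancy criterion. Using $(H x_L^{(k)})_i = L (H x^{(k)})_i$ the logarithmic term is invariant while the remaining terms pull out a factor $L$, so the criterion rewrites as
\begin{equation}
\frac{2L}{N} \Bigl( \sum_{i=1}^N y_i \log \frac{y_i}{(Hx^{(k)})_i} + (Hx^{(k)})_i - y_i \Bigr) \leq \tau,
\label{planeq}
\end{equation}
i.e.\ $D_{KL}(y, x^{(k)}) \leq \tau N/(2L)$.

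Next I would argue as in the proof of \Tref{EM_Morozov_stop_rule}: condition $2a)$ of \Dref{stat_reg_def} amounts to showing $k(L) \to \infty$ as $L \to \infty$. Since the right-hand side of \eref{planeq} tends to zero, the stopping rule forces $D_{KL}(y, x^{(k(L))}) \to 0$. If $y \in H(\mathcal{C})$, \Lref{ML_minimum} says $\inf_k D_{KL}(y, x^{(k)}) = 0$ (reached in the limit by EM), so for each $L$ a smallest index $k(L)$ exists and necessarily diverges with $L$. Conversely, if $y \notin H(\mathcal{C})$, \Pref{zero_conv} gives $D_{KL}(y, x^{(\infty)}) > 0$, and the monotonicity of EM in $D_{KL}$ then implies $D_{KL}(y, x^{(k)}) \geq D_{KL}(y, x^{(\infty)}) > 0$ for every $k$; once $L$ is large enough that $\tau N/(2L) < D_{KL}(y, x^{(\infty)})$ the criterion is never met and $k(L)$ is undefined, so the method fails to be a regularization in the sense of \Dref{stat_reg_def}.

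I do not expect any genuine obstacle: the argument is essentially a template transfer from \Tref{EM_Morozov_stop_rule}, with the minor novelty being the precise cancellation that makes the logarithm scale-invariant so that the factor $L$ appears cleanly outside the bracket in \eref{planeq}. The only point requiring care is stating the "only if" direction correctly, i.e.\ making explicit that when $y \notin H(\mathcal{C})$ the stopping index $k(L)$ does not exist for large $L$, which is enough to conclude that no choice $k = k(\delta)$ can fulfill condition $2a)$ of \Dref{stat_reg_def}.
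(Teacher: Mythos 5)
Your proposal is correct and follows essentially the same route as the paper: exploit the homogeneity of the Kullback--Leibler discrepancy under the joint scaling $y\mapsto Ly$, $x^{(k)}\mapsto Lx^{(k)}$ given by Lemma \ref{em_scaling}, reduce the criterion to $\frac{2}{N}D_{KL}(y,x^{(k)})\leq \tau/L$, and conclude via Lemma \ref{ML_minimum} and the convergence of EM exactly as in Theorem \ref{EM_Morozov_stop_rule}. The only difference is that you spell out the ``only if'' direction (non-existence of the stopping index for large $L$ when $y\notin H(\mathcal C)$) explicitly, which the paper leaves implicit by deferring to the earlier theorem.
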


\begin{proof}
Let $x^{(k)}$ be the EM iteration with data entry $y$. 
By taking $y_L = Ly$, with $L>0$, the stopping criterion for EM applied to the data $y_L$
takes the form
\begin{equation}
\frac{2}{N}\sum_{i=1}^N  y_i \log \frac{y_i}{(Hx^{(k)})_i} + (Hx^{(k)})_i - y_i \leq \frac{\tau}{L}
\end{equation}
and the thesis follows as in Theorem \ref{EM_Morozov_stop_rule}.
\end{proof}

These theorems point out that condition $y \in H(\mathcal C)$ is crucial
for these classical stopping rules to define a regularization algorithm for ISRA and EM.
Tikhonov defines the cases where this property is not satisfied
as genuinely ill-posed.
In practice, what very often happens is that data
do not belong to the convex cone $H(\mathcal C)$.
Indeed, if the true object $x_T$ contains even one zero value,
the corresponding signal $y_T=Hx_T$ belongs to the frontier of $H(\mathcal C)$,
and any little variation due to noise fluctuation
can move the data $y$ outside $H(\mathcal C)$.
Moreover, in such cases the standard stopping rules could not work.
We will see an example in \Sref{sec:Experiments}.

In the next Section we will introduce a stopping criterion defining a regularization
algorithm for constrained ML problems that is valid even in the case of genuinely ill-posed problems.

\section{Constrained stopping criterion}
\label{sec:Criterion}

Our aim is to realize a stopping rule which gives rise
to a regularization algorithm for every $y \in \R^N$.
To do this we first observe that the regularization methods discussed in
the previous section are based on equations of the kind
\begin{equation}
\label{generalized_morozov}
r(x^{(k)},y) \leq \tau~\E \left( r(x^{(k)},y) \right) ~~~ ,
\end{equation}
where
$r(x,y)$ is a function specific for each rule and $\tau$ is a fixed positive number.
Choosing $r(x,y)=D_{LS}((x,y))$ we find the Morozov's discrepancy principle;
choosing $r(x,y)=\|(Hx-y)/\sqrt{Hx}\|^2$ we find the Pearson's test criterion;
choosing $r(x,y)=D_{KL}(y,x)$ we find the Poisson discrepancy principle.

The main drawbacks of these criteria is that for genuinely ill-posed problems,
i.e. when $y \not\in H(\mathcal C)$, the l.h.s. of \eref{generalized_morozov}
does not converge to $0$ for $k \to \infty$ (see Lemma \ref{zero_conv}), and therefore
it may happen that the stopping rule is never applied.
A choice of $r(x,y)$ that for sure overtakes this difficulty
is a function converging to $0$ for $k \to \infty$ and whose expected value is positive.
In this paper we propose to choose
\begin{equation}
\label{CBR_definition}
r(x,y) = \|x ~ \nabla L_y(x)\|^2 ~~~ .
\end{equation}
In fact, this choice ensures that given an algorithm $x^{(k)}$
converging to the constrained minimum of \eref{neglog_problem}, then $\lim_{k \to \infty} r(x^{(k)},y) = 0$.
Let us call $r(x,y)$ the \emph{constrained backprojected residual} (CBR)
and therefore the associated stopping rule \eref{generalized_morozov} is the CBR criterion.
In the following we describe the CBR criterion for the Gaussian and Poisson cases
and prove that, if applied to ISRA and EM,
the stopping rule leads to two regularization algorithms in the sense of Definition \ref{stat_reg_def},
without any restriction on the input data.
In the next section, we will also show that the CBR criterion works in a very
reliable way, using applications to astronomical image processing.

\subsection{Gaussian case}

In the Gaussian case the CBR takes the form
\begin{equation}
r(x,y) =  \| x ~ H^T \left( Hx - y \right) \|^2 ~~~ ,
\end{equation}
and the following result holds true:
\begin{prp}
In the Gaussian case, the expected value of $r(x,y)$ is
\begin{equation}
\E (r(x,y)) =  \sum_{j=1}^M x_j^2 ~ (H_2^T \sigma^2)_j ~~~ ,
\end{equation}
where $(H_2)_{ij} = (H_{ij})^2$.
\end{prp}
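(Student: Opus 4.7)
The plan is to exploit linearity of expectation after expanding $r(x,y)$ into a sum of independent per-component pieces, and then reduce each piece to a variance computation thanks to the fact that the residual $Hx - y$ is mean-zero.

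First, I would write the function out coordinatewise: since multiplication is element-by-element, $r(x,y) = \sum_{j=1}^{M} x_j^{2}\, \bigl(H^{T}(Hx - y)\bigr)_j^{2}$, and the $j$-th factor is $\bigl(H^T(Hx-y)\bigr)_j = \sum_{i=1}^N H_{ij}\bigl((Hx)_i - y_i\bigr)$. Because $x$ is deterministic in this expectation and only $y$ is random, linearity of expectation moves the $x_j^{2}$ out of $\mathbb{E}$, leaving the task of computing $\mathbb{E}\bigl[\bigl(H^T(Hx-y)\bigr)_j^{2}\bigr]$ for each $j$.

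Next I would use the Gaussian model $Y_i \sim \mathcal{N}((Hx)_i, \sigma^2_i)$ with independent components. The quantity $(Hx)_i - y_i$ has mean zero and variance $\sigma^2_i$, so $\bigl(H^T(Hx-y)\bigr)_j$ is a linear combination of independent centered variables, hence itself has mean zero. The expected square therefore equals the variance, and independence lets me add term-by-term: $\mathrm{Var}\bigl(\sum_i H_{ij}((Hx)_i - y_i)\bigr) = \sum_i H_{ij}^{2}\,\sigma^2_i = (H_2^{T}\sigma^2)_j$.

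Plugging this back gives $\mathbb{E}[r(x,y)] = \sum_{j=1}^M x_j^{2}\,(H_2^{T}\sigma^2)_j$, which is exactly the claim. There is no real obstacle here; the whole argument is a mean/variance bookkeeping calculation. The only subtlety worth flagging explicitly is that the statement is written with $\sigma^2$ as a vector indexed over detector pixels, which silently generalizes the i.i.d.\ scalar-variance setting introduced earlier in \Sref{sec:Constrained}; the scalar case is recovered by taking $\sigma^2_i = \sigma^2$ for all $i$, so that $(H_2^{T}\sigma^2)_j = \sigma^2 \sum_i H_{ij}^{2}$.
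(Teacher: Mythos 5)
Your proof is correct and follows essentially the same route as the paper's: expand $r$ coordinatewise, pull out $x_j^2$, and use independence and the mean-zero residual so that only the diagonal terms $\E[(Hx-y)_i^2]=\sigma^2$ survive, yielding $(H_2^T\sigma^2)_j$. Your remark that the stated formula silently treats $\sigma^2$ as a vector (recovering the paper's i.i.d.\ setting when all $\sigma_i^2$ coincide) is a fair and slightly more general reading, but the argument is the same.
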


\begin{proof}
The expected value of each component is
\begin{equation}
\E \left[ x^2_j ~ ( H^T \left( Hx - y \right) )^2_j \right] =  x^2_j ~ \E \left[ ( H^T \left( Hx - y \right) )^2_j \right] ~~~.
\end{equation}
Since the noise is independently distributed,
the expected value of the product $(Hx-y)_i (Hx-y)_h$  
is different from zero only if $i=h$. Then
\begin{equation}
\E \left[ x^2_j ~ ( H^T \left( Hx - y \right) )^2_j \right] =
x^2_j ~ (H_2^T ~ \sigma^2)_j ~~~ ,
\end{equation}
since $\E \left[ \left( Hx - y \right)_i^2 \right] = \sigma^2$ for all $i$.
By summing up all the components we get the thesis.
\end{proof}

Now, we can explicitly write the CBR criterion for the Gaussian case.
Giving an algorithm $x^{(k)}$ converging to a constrained LS solution,
the criterion says that a reliable estimate of the solution is obtained
by choosing the first iteration $k$ such that:
\begin{equation}
\| x^{(k)} ~ H^T \left( Hx^{(k)} - y \right) \|^2 \leq \tau~\sum_{i=1}^N (x_i^{(k)})^2  (H_2^T \sigma^2)_i ~~~ ,
\label{gauss_cdc}
\end{equation}
with $\tau$ fixed positive number.
\begin{thm}
ISRA supplied with the CBR criterion (\ref{gauss_cdc}),
becomes a well-defined regularization method.
\label{ISRA_CBR}
\end{thm}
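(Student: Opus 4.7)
The plan is to directly verify the two conditions of Definition \ref{stat_reg_def}. Condition 1) is immediate, since the ISRA update \eref{ISRA} is defined for every nonnegative initialization and every data vector $y \in \R^N$, hence in particular on the range of $H$. All the work is therefore in condition 2a), and I would model the argument on the proof of Theorem \ref{ISRA_Morozov_stop_rule} by examining the noise-free limit of the stopping rule \eref{gauss_cdc}.

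The key structural difference with respect to the Morozov case lies in the left-hand side. By construction $r(x,y) = \|x\,H^T(Hx-y)\|^2$ is just the squared norm of the KKT residual \eref{KKT} for the least-squares problem. Since ISRA converges to a constrained minimizer $\hat x$ of $D_{LS}$, which automatically satisfies $\hat x\, H^T(H\hat x - y) = 0$, one obtains $r(x^{(k)}, y) \to 0$ as $k \to \infty$ for every $y \in \R^N$, with no restriction analogous to $y \in H(\mathcal C)$. This is exactly the property that motivated the introduction of the CBR in \eref{CBR_definition}, and it is what breaks the obstruction exposed in Theorems \ref{ISRA_Morozov_stop_rule}--\ref{EM_Poisson_stop_rule}.

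Next I would take the limit $\sigma \to 0$ on the right-hand side $\tau \sum_j (x_j^{(k)})^2 (H_2^T \sigma^2)_j$. Using boundedness of the ISRA iterates (a consequence of their monotone convergence), this expression vanishes, so in the noise-free limit the stopping rule \eref{gauss_cdc} collapses to $r(x^{(k)}, Hx_T) \leq 0$, i.e.\ to the KKT equality $r(x^{(k)}, Hx_T) = 0$. Generically, ISRA only attains the KKT point in the limit $k \to \infty$, so the first iteration satisfying \eref{gauss_cdc} must diverge, $k(\delta) \to \infty$, which is precisely condition 2a).

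The main obstacle, as I see it, is making the heuristic $\sigma \to 0$ reasoning fully rigorous, since $\delta \to 0$ is a stochastic limit in Definition \ref{stat_reg_def}. The cleanest route is a compactness/contradiction argument: if $k(\delta_n)$ stayed bounded along some sequence $\delta_n \to 0$, one could extract a subsequence on which $k(\delta_n) \equiv k^*$ is constant and, by continuity of the ISRA update in the data, the corresponding iterate $x_{\delta_n}^{(k^*)}$ converges to the $k^*$-th ISRA iterate for the clean data $Hx_T$; passing to the limit in \eref{gauss_cdc} would then force $r = 0$ at a finite iterate, contradicting the generic non-termination of ISRA in finitely many steps. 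Once this passage to the limit is justified, the theorem follows.
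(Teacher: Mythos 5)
Your proposal is correct and follows essentially the same route as the paper: condition 1) is immediate, and for condition 2a) one sets $\sigma=0$ so that the right-hand side of \eref{gauss_cdc} vanishes and the criterion reduces to requiring the KKT residual $\|x^{(k)}H^T(Hx^{(k)}-y)\|^2$ to be zero, which ISRA attains only as $k\to\infty$. The paper's own proof is in fact terser than yours --- it simply observes that the l.h.s.\ converges to $0$ because the algorithm is convergent --- so your additional compactness/contradiction discussion supplies rigor the authors omit, but it is not a different approach.
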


\begin{proof}
The first condition of Definition \ref{stat_reg_def} is obvious.
As for the second condition, in the Gaussian case the coefficient of variation $\delta$ tends to $0$ if and only if the standard deviation $\sigma$ tends to $0$.
Therefore to compute the limit in the second condition of Definition \ref{stat_reg_def} 
we can set $\sigma=0$ and so the CBR criterion (\ref{gauss_cdc})
for $\delta \to 0$ becomes
\begin{equation}
\| ~ x^{(k)} ~ H^T \left( Hx^{(k)} - y \right) \|^2 \leq 0 ~~~ .
\label{isra_cond}
\end{equation}
Since the algorithm is convergent, the l.h.s. in \eref{isra_cond} converges to $0$. 
\end{proof}

\subsection{Poisson case}

In the Poisson case, the CBR takes the form
\begin{equation}
r(x,y) =  \left\| x ~ H^T \left( 1 - \frac{y}{Hx} \right) \right\|^2 ~~~ ,
\end{equation}
and the following result holds true:
\begin{prp}
In the Poisson case, the expected value of $r(x,y)$ is
\begin{equation}
\E (r(x,y)) =  \sum_{j=1}^M x_j^2 ~ \left( H_2^T \frac{1}{Hx} \right)_j ~~~ ,
\end{equation}
where $(H_2)_{ij} = (H_{ij})^2$.
\end{prp}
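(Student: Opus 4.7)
The plan is to mirror the Gaussian computation, exploiting two basic properties of independent Poisson data: the mean of $y_i$ equals $(Hx)_i$ and the variance of $y_i$ also equals $(Hx)_i$. First I would write out $r(x,y)$ componentwise as
\begin{equation*}
r(x,y) = \sum_{j=1}^M x_j^2 \left( \sum_{i=1}^N H_{ij} \left( 1 - \frac{y_i}{(Hx)_i} \right) \right)^2,
\end{equation*}
pull the deterministic factor $x_j^2$ outside the expectation, and expand the inner square to obtain a double sum over indices $i$ and $h$ with coefficients $H_{ij} H_{hj}$.

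Next, the key observation is that $\mathbb{E}\bigl[ 1 - y_i/(Hx)_i \bigr] = 0$ for each $i$, since $\mathbb{E}(y_i) = (Hx)_i$ in the Poisson model. Because the components $y_i$ are independent, the cross terms $i \neq h$ in the double sum contribute
\begin{equation*}
\mathbb{E}\!\left[\left(1 - \frac{y_i}{(Hx)_i}\right)\left(1 - \frac{y_h}{(Hx)_h}\right)\right] = \mathbb{E}\!\left[1 - \frac{y_i}{(Hx)_i}\right] \mathbb{E}\!\left[1 - \frac{y_h}{(Hx)_h}\right] = 0,
\end{equation*}
so only the diagonal terms $i = h$ survive.

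For the diagonal terms, the expectation reduces to a variance computation,
\begin{equation*}
\mathbb{E}\!\left[\left(1 - \frac{y_i}{(Hx)_i}\right)^2\right] = \frac{\mathrm{Var}(y_i)}{(Hx)_i^2} = \frac{(Hx)_i}{(Hx)_i^2} = \frac{1}{(Hx)_i},
\end{equation*}
using the defining Poisson identity $\mathrm{Var}(y_i) = (Hx)_i$. Plugging this back in and recognizing $H_{ij}^2 = (H_2)_{ij}$ yields
\begin{equation*}
\mathbb{E}(r(x,y)) = \sum_{j=1}^M x_j^2 \sum_{i=1}^N (H_2)_{ij} \frac{1}{(Hx)_i} = \sum_{j=1}^M x_j^2 \left( H_2^T \frac{1}{Hx} \right)_j,
\end{equation*}
which is the claim.

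There is no real obstacle here: the argument is a direct moment computation and is structurally identical to the Gaussian proposition proved just above, with $\sigma^2$ replaced by the Poisson variance $(Hx)_i$. The only mild points of care are (i) remembering that $1/(Hx)$ is treated as a vector in the notation $H_2^T(1/(Hx))$, and (ii) implicitly assuming $(Hx)_i > 0$ for all $i$ so that $r(x,y)$ is well defined, which is consistent with the positivity assumption $H_{ij} > 0$ and with the regime in which the Poisson likelihood itself makes sense.
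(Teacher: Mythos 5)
Your proof is correct and follows essentially the same route as the paper: both rewrite $1 - y_i/(Hx)_i$ as the centered variable $((Hx)_i - y_i)/(Hx)_i$, use independence to discard the off-diagonal terms $i \neq h$, and apply the Poisson identity $\mathrm{Var}(y_i) = (Hx)_i$ to the surviving diagonal terms. No substantive differences to report.
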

\begin{proof}
The expected value of each component is
\begin{eqnarray*}
\E \left[ x^2_j ~ \left( H^T \left( 1 - \frac{y}{Hx} \right) \right)_j^2 \right] &=& 
x^2_j ~ \E \left[ \left( H^T \left( 1 - \frac{y}{Hx} \right) \right)_j^2 \right] \\
&=& x^2_j ~ \E \left[ \left( H^T \left( \frac{1}{Hx} ( Hx - y ) \right) \right)_j^2 \right] ~~~ .
\end{eqnarray*}
As in the Gaussian case, noise is independently distributed. Therefore we can exploit again the fact that  the expected value of the product $(Hx-y)_i (Hx-y)_h$ is different from zero only if $i=h$. Then
\begin{equation}
\E \left[ x^2_j ~ \left( H^T \left( 1 - \frac{y}{Hx} \right) \right)_j^2 \right] =
x^2_j ~ \left( H_2^T \left( \frac{1}{(Hx)^2} \E \left[ ( Hx - y )^2 \right] \right) \right)_j ~~~.
\end{equation}
Since noise is Poisson, $\E \left[ ( Hx - y )^2 \right] = Hx$. Summing up all the components we have the thesis.
\end{proof}

Now, we can explicitly write the CBR criterion for the Poisson case. The criterion says that a reliable estimate of the solution is obtained by choosing the first iteration $k$ such that
\begin{equation}
\left\| ~ x^{(k)}  ~ H^T \left( 1-\frac{y}{Hx^{(k)}} \right) \right\|^2 \leq \tau~\sum_{i=1}^N (x^{(k)})_i^2 \left( H_2^T \frac{1}{Hx^{(k)}} \right)_i ~~~,
\label{cdc_poisson}
\end{equation}
where $x^{(k)}$ is the EM iteration and $\tau$ is a fixed positive number. To prove the regularization property for this stopping rule applied to EM, we need the following Lemma.
\begin{lem}
\label{limited}
Let $x^{(k)}$ be the $k$-th iteration of EM. The following inequality holds:
\begin{equation}
\sum_{i=j}^M (x^{(k)})_j^2 \left( H_2^T \frac{1}{Hx^{(k)}} \right)_j \leq \sum_{i=1}^N y_i ~.
\end{equation}
\end{lem}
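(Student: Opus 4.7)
My plan is to expand the left-hand side by writing the matrix action explicitly, swap the order of summation, apply a pointwise inequality of the form $\sum_j a_j^2 \leq (\sum_j a_j)^2$ for nonnegative $a_j$, and finish by invoking Lemma \ref{flux}.

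More concretely, I first rewrite the left-hand side as
\begin{equation*}
\sum_{j=1}^M (x_j^{(k)})^2 \sum_{i=1}^N \frac{H_{ij}^2}{(Hx^{(k)})_i}
= \sum_{i=1}^N \frac{1}{(Hx^{(k)})_i} \sum_{j=1}^M H_{ij}^2 (x_j^{(k)})^2,
\end{equation*}
which is legitimate since EM preserves positivity of the iterates and all entries of $H$ are strictly positive, so every denominator $(Hx^{(k)})_i$ is strictly positive.

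The key observation is then the following elementary inequality: for any nonnegative numbers $a_1,\ldots,a_M$ one has
\begin{equation*}
\sum_{j=1}^M a_j^2 \le \Bigl(\sum_{j=1}^M a_j\Bigr)^2,
\end{equation*}
because expanding the square produces the sum of squares plus nonnegative cross terms. Applying this with $a_j = H_{ij} x_j^{(k)} \ge 0$ yields, for each $i$,
\begin{equation*}
\sum_{j=1}^M H_{ij}^2 (x_j^{(k)})^2 \le \Bigl(\sum_{j=1}^M H_{ij} x_j^{(k)}\Bigr)^2 = (Hx^{(k)})_i^2.
\end{equation*}

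Substituting back, each fraction simplifies to $(Hx^{(k)})_i$, so the left-hand side is bounded by $\sum_{i=1}^N (Hx^{(k)})_i$. Finally, Lemma \ref{flux} (flux conservation of EM) identifies this with $\sum_{i=1}^N y_i$, which is exactly the desired inequality. I do not foresee a real obstacle here: the only subtlety is that the argument requires $H_{ij} \ge 0$ and $x_j^{(k)} \ge 0$ for the cross-term-nonnegativity step, but both hold thanks to the standing positivity assumption on $H$ and the multiplicative structure of EM, which preserves nonnegativity from any nonnegative initialization.
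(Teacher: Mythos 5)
Your proof is correct and follows essentially the same route as the paper: swap the order of summation, bound $\sum_j H_{ij}^2 (x_j^{(k)})^2$ by $(Hx^{(k)})_i^2$ via the $\ell^2$-versus-$\ell^1$ comparison for nonnegative vectors (the paper states it as $\sqrt{\sum_j h_{ij}^2 x_j^2} \le \sum_j h_{ij} x_j$; you use the squared form), and conclude with the flux-conservation Lemma \ref{flux}. The only cosmetic difference is that the paper inserts an intermediate bound by $\sum_i \sqrt{\sum_j h_{ij}^2 x_j^2}$, whereas you cancel the denominator in one step.
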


\begin{proof}
For simplicity of notation we use $x$ instead of $x^{(k)}$.
We begin by noting that
\begin{equation}
\sum_{i=1}^M x_j^2 \left( H_2^T \frac{1}{Hx} \right)_j  = 
\sum_{i=1}^N (H_2 x^2)_i \left(  \frac{1}{Hx} \right)_i ~~~ .
\end{equation}
Then
\begin{eqnarray*}
\sum_{i=1}^N \frac{ \sum_{j=1}^M h^2_{ij} x^2_j }{ \sum_{j=1}^M h_{ij} x_j } \leq
\sum_{i=1}^N \sqrt{ \sum_{j=1}^M h^2_{ij} x^2_j } \leq
\sum_{i=1}^N \sum_{j=1}^M h_{ij} x_j \leq
\sum_{i=1}^N y_i
\end{eqnarray*}
having used the relation
$\sqrt{ \sum_{j=1}^M h^2_{ij} x^2_j } \leq \sum_{j=1}^M h_{ij} x_j$
and Lemma \ref{flux}.
\end{proof}

\begin{thm}
EM supplied with the CBR criterion (\ref{cdc_poisson}) becomes a well-defined regularization method.
\label{EM_CBR}
\end{thm}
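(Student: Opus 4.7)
The plan is to mirror the argument used for Theorem \ref{ISRA_CBR} and the previous Poisson theorems, combining the scaling strategy of Theorems \ref{EM_Morozov_stop_rule}--\ref{EM_Poisson_stop_rule} with the decisive new feature of the CBR highlighted in \Sref{sec:Criterion}: namely that $r(x^{(k)},y)$ converges to zero along the EM iterates for \emph{every} input $y$, irrespective of whether $y \in H(\mathcal C)$. Condition~1 of Definition \ref{stat_reg_def} is immediate, since \eref{cdc_poisson} is well-defined on every $y \in \R^N$. For condition~2a), as in the earlier Poisson theorems, a Poisson variable has coefficient of variation tending to zero if and only if its mean tends to infinity, so it suffices to exhibit $k(L) \to \infty$ as $L \to \infty$, where $k(L)$ is the first iteration at which the criterion is met by EM applied to the rescaled data $y_L = Ly$.

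The next step is to substitute the scaled iterates $x_L^{(k)} = L\, x^{(k)}$ from Lemma \ref{em_scaling} into \eref{cdc_poisson} applied to $y_L$. The factors of $L$ inside $1 - y_L/(Hx_L^{(k)})$ cancel, so the left-hand side becomes $L^2\, r(x^{(k)},y)$ with $r$ the CBR \eref{CBR_definition}. On the right-hand side, $(x_L^{(k)})^2$ contributes $L^2$ while $1/(Hx_L^{(k)})$ contributes $1/L$, yielding an overall factor $L$. After dividing through by $L$, the criterion reduces to
\begin{equation}
\label{plan:scaled_CBR}
L\, r(x^{(k)},y) \;\leq\; \tau \sum_{j=1}^M (x^{(k)}_j)^2 \left( H_2^T \frac{1}{Hx^{(k)}} \right)_j,
\end{equation}
and Lemma \ref{limited} bounds the right-hand side by $\tau \sum_{i=1}^N y_i$, a quantity independent of both $k$ and $L$.

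The decisive observation is then that, in contrast with the three classical stopping rules of \Sref{sec:Regularization}, one has $r(x^{(k)},y) \to 0$ as $k \to \infty$ without any assumption on $y$: since EM converges to a constrained minimizer of $D_{KL}$, the KKT identity $x\, \nabla L_y(x) = 0$ forces the CBR to vanish in the limit. Hence for every $L$ the bound \eref{plan:scaled_CBR} is eventually satisfied and $k(L)$ is well defined. To conclude $k(L) \to \infty$, I would fix an arbitrary $k_0$: as long as $r(x^{(k_0)},y) > 0$, the left-hand side $L\, r(x^{(k_0)},y)$ diverges as $L\to\infty$ while the right-hand side remains uniformly bounded, so the criterion fails at iteration $k_0$ for $L$ large enough, forcing $k(L) > k_0$; since $k_0$ is arbitrary, $k(L) \to \infty$. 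The only delicate point I foresee is the degenerate case in which $r(x^{(K)},y) = 0$ at some finite $K$, meaning that EM has already reached the constrained ML solution; this is not actually an obstruction because condition~2) of Definition \ref{stat_reg_def} is then trivially fulfilled for every $k(L) \geq K$.
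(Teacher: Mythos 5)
Your proposal is correct and follows essentially the same route as the paper's proof: rescale via Lemma \ref{em_scaling}, bound the right-hand side uniformly by $\tau\sum_i y_i$ via Lemma \ref{limited}, and conclude from the convergence of the CBR to zero along EM iterates that the first stopping index diverges as $L\to\infty$. Your explicit ``fix $k_0$ and force $k(L)>k_0$'' argument and the remark on the degenerate case $r(x^{(K)},y)=0$ are slightly more careful renderings of the paper's terser final step, but not a different method.
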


\begin{proof}
The first condition of Definition \ref{stat_reg_def} is obvious.
For the second one, 
we consider a given data $y$ and the data $y_L=Ly$ scaled by a number $L>0$.
We will prove that when $L$ tends to infinity
the number of iterations needed to satisfy the CBR criterion tends to infinity.
According to the remark at the beginning of the proof of Theorem \ref{EM_Morozov_stop_rule},
this prove the condition $2a)$ of the Definition \ref{stat_reg_def}.

Given the data $y_L$ and the corresponding EM algorithm $x_L^{(k)}$
the CBR criterion stops the iterative process  at the first iteration $k$ such that
\begin{equation}
\left\| ~ x_L^{(k)} ~ H^T \left( 1-\frac{y_L}{Hx_L^{(k)}} \right) \right\|^2 \leq \tau~\sum_{i=1}^M (x_L^{(k)})_j^2 \left( H_2^T \frac{1}{Hx_L^{(k)}} \right)_j  ~~~ ,
\end{equation}
for a fixed $\tau>0$. This equation can be written by replacing $y_L$ with $y$ 
and the corresponding EM algorithm $x_L^{(k)}$ with $x^{(k)}$ by using Lemma \ref{em_scaling}
and picking up the $L$ factor, i.e.
\begin{equation}
\left\| ~ x^{(k)} ~ H^T \left( 1-\frac{y}{Hx^{(k)}} \right) \right\|^2 \leq 
\frac{\tau}{L} \sum_{i=1}^M (x^{(k)})_j^2 \left( H_2^T \frac{1}{Hx^{(k)}} \right)_j  \leq 
\frac{\tau}{L} \sum_{i=1}^N y_i ~,
\end{equation}
where the second inequality holds by Lemma \ref{limited}.
For $L \to \infty$ we have
\begin{equation}
\left\| ~ x^{(k)} ~ H^T \left( 1-\frac{y}{Hx^{(k)}} \right) \right\|^2 \leq 0 ~~~ .
\label{em_cond}
\end{equation}
Since the algorithm is convergent, the l.h.s. in \eref{em_cond} converges to $0$. 
\end{proof}

\section{Numerical experiments}
\label{sec:Experiments}

In this section we test the proposed regularization algorithm in
the case of image reconstruction from data recorded by a solar hard X-ray satellite.
The Reuven Ramaty High Energy Solar Spectroscopic Imager (RHESSI) \cite{lin} mission
has been launched by NASA in February 2002 with the aim of investigating
emission and energy transport mechanisms during solar flares.
RHESSI hardware is made of nine pairs of rotating collimators
that time-modulate the incoming photon flux before it is detected by the corresponding
Ge detectors. As a consequence, the RHESSI 
imaging problem consists in locally retrieving the photon flux intensity image
starting from a given set of count modulation profiles.

We first validate the stopping rule introduced in this paper
for ISRA and EM, in the case of synthetic data mimicking the modulation introduced by RHESSI grids and affected by Gaussian and Poisson noise, respectively.
Then we investigate the behavior of EM equipped with the new criterion
when applied to a set of experimental RHESSI observations.

\subsection{Simulated data}

Solar flares are sudden and intense explosions occurring high in the solar corona
and accelerating electrons down to the thicker chromospheric part of the solar atmosphere.
While diving into the plasma driven by the flow lines of intense magnetic fields,
these electrons emit hard X-rays by collisional bremsstrahlung. As a result, typical hard X-ray source
configurations observed by RHESSI present one, two or a higher number of bright footpoints on a weak background.
In this example we consider a simulated source configuration mimicking
the structure and the physical properties of a real flare
observed by RHESSI on July 23 2002 (Figure \ref{fig:true_image}).
Such configuration is contained in a $64$ by $64$ pixel image.
The sources have Gaussian form and are arranged according to the following scheme:
the left source (source L) is located at coordinates $(16,32)$ with variance $0.64$ and amplitude $1.28$,
the center source (source C) is located at coordinates $(32,32)$ with variance $0.64$ and amplitude $1.6$,
the upper right source (source UR) is located at coordinates $(42,45)$ with variance $0.48$ and amplitude $0.6$ and
the lower right source (source LR) is located at coordinates $(42,19)$ with variance $0.64$ and amplitude $1.28$.
Using the routines of Solar SoftWare (SSW) \href{http://www.mssl.ucl.ac.uk/surf/sswdoc/}{\it http://www.mssl.ucl.ac.uk/surf/sswdoc/} we reproduced the 
RHESSI acquisition process when the grids are reached by the photon flux emitted by such simulated source
constellation and simulated two different sets of count modulation profiles using detectors from 3 through 8.

The first set is affected by white Gaussian noise with standard deviation $\sigma=10$.
\begin{figure}
\begin{center}
\includegraphics[scale=0.5]{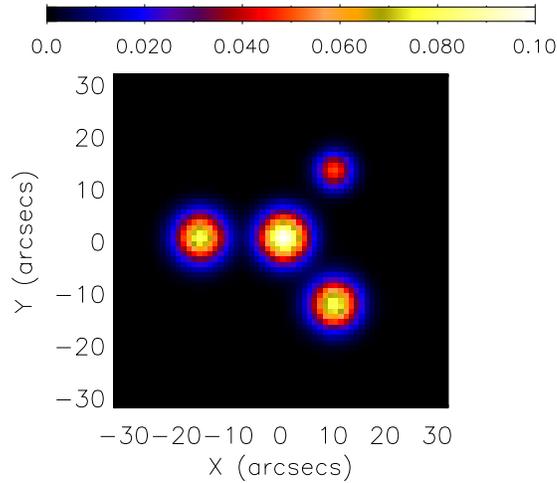}
\end{center}
\caption{Simulated source configuration mimicking the structure of a real flare
observed by RHESSI on July 23 2002}
\label{fig:true_image}
\end{figure}
We applied ISRA against these synthetic data and we stopped the iterations both in correspondence with the minimum of the L2-norm of the difference between the reconstructed and the simulated configuration (L2-norm criteria) and by means of the CBR criterion, using $\tau = 1 / \sigma^2$ (such choice has been made to contrast the typical oversmoothing property of discrepancy methods). L2-norm minimization applies at the 518-th iteration and CBR criterion at the 645-th iteration.
Figure \ref{fig:isra_reconstruction} shows the images obtained with the two stopping rules. The shapes of the sources determined using the two methods are very similar
and are also consistent with the Gaussian source structures assumed in the original configuration. On the other hand the image provided by the CBR rule shows a better separation for the four sources. We used the images obtained by the two stopping rules to
compute the total flux emitted by the four sources within a square with side length $13$ pixels and located around the center of the sources of the original configuration.
Then we computed the rate of the flux reconstructed by the two criteria for each sources.
Table \ref{tab:isra_cfr} shows that for source C the difference in reconstructing the original photometry between CBR criterion and L2-minimization is negligible.
Conversely, the CBR rule has a better photometry in the case of the other three sources and, particularly,
in the case of source UR, which is the weakest one in the simulated configuration.
\begin{table}
\begin{center}
\begin{tabular}{c|ccccc}
Source position & flux & L2-norm & CBR & L2-norm \%  & CBR \% \\ \hline
C & 5.884   &    4.025  &  4.057 &  68.4 & 68.9 \\
L &   4.691   &    3.882  &  4.048 &  82.7 & 86.2 \\     
BR &  4.762   &  4.265 & 4.456  &  89.5 & 93.5\\
UR &  1.824   &  1.454  &  1.654 & 79.7 & 90.6
\end{tabular}
\end{center}
\caption{Comparison between the photometry of the solutions
provided by ISRA stopped using the CBR and the L2-norm.
The first column represents the original flux integrated in a $13$ by $13$ square
located around the center of the sources. The second and the third columns
indicate the fluxes reconstructed by the two methods within the same square,
and the fourth and fifth columns show the rates of the reconstructed fluxes.}
\label{tab:isra_cfr}
\end{table}
\begin{figure}
\includegraphics[scale=0.45]{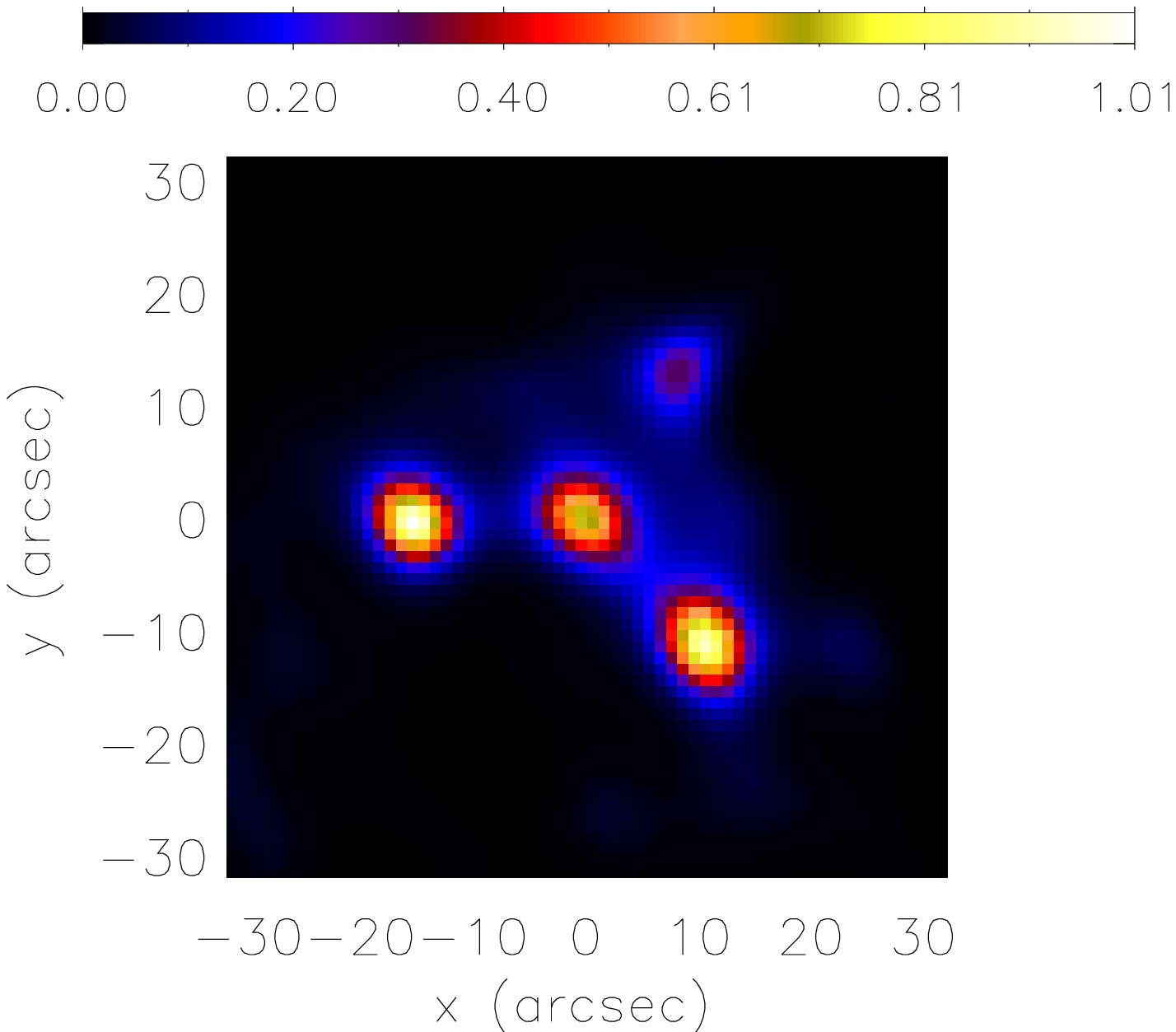}
\includegraphics[scale=0.45]{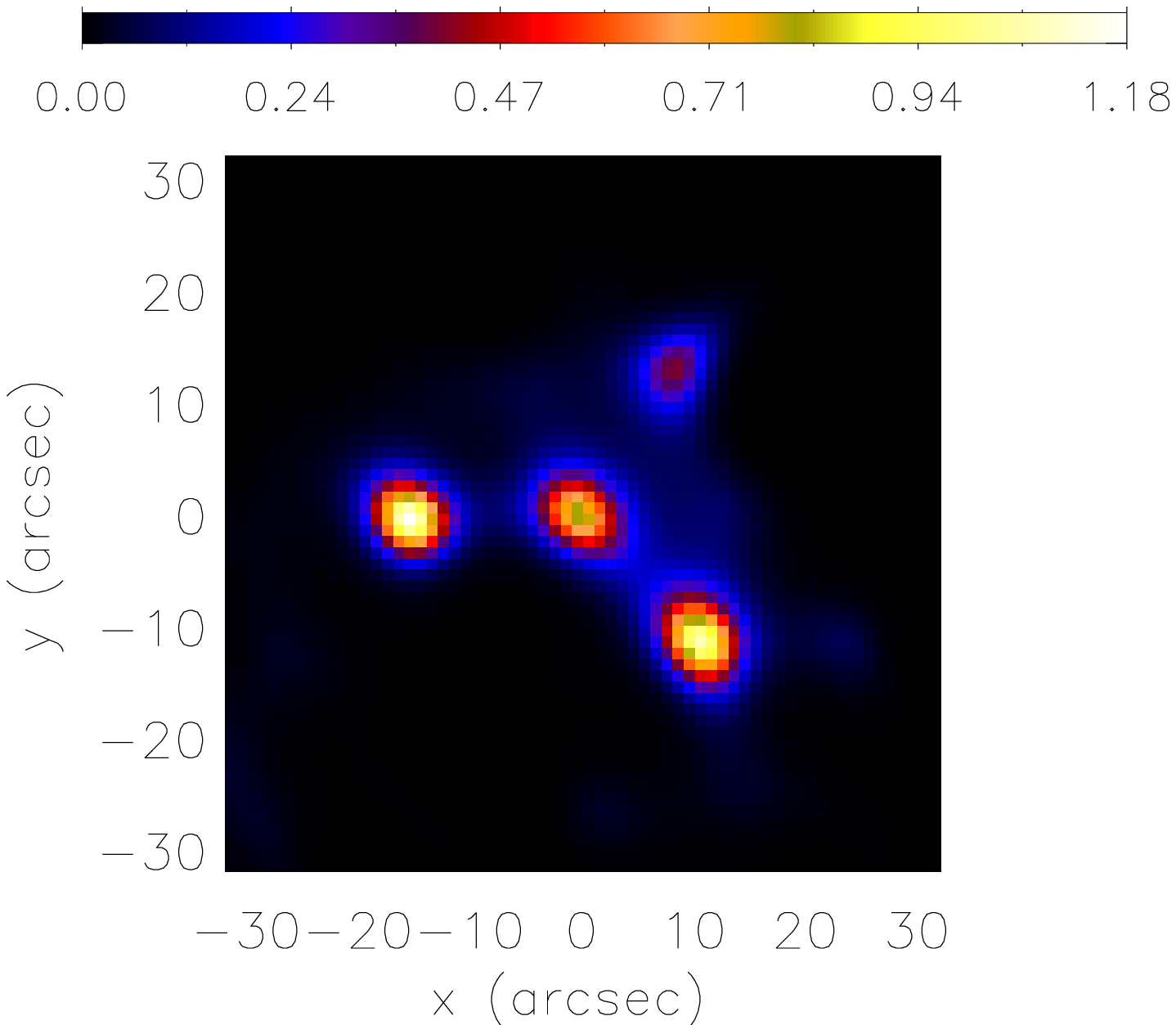}
\caption{From left to right: the reconstruction with ISRA stopped in correspondence with the minimum of L2-norm (518-th iteration) and the reconstruction with ISRA stopped according to CBR criterion (645-th iteration).}
\label{fig:isra_reconstruction}
\end{figure}

The second set of synthetic profiles has been affected by Poisson noise and EM was applied to obtain
images in Figure \ref{fig:em_reconstruction}. The left panel of Figure \ref{fig:em_reconstruction}
shows the image obtained stopping the iterations using the L2-norm, and the right panel
shows the image obtained using the CBR (coherently with the Gaussian case, here we chose $\tau=N/\sum_{i=1}^N y_i$).
The L2-norm rule stops EM at the 133-th iteration and the CBR criterion at the 229-th iteration.
Both reconstructions show that source UR has a smaller support with respect to what obtained by ISRA in the case of white Gaussian noise.
This is a direct consequence of the fact that Poisson noise is signal dependent and hence the weakest source has smaller signal to noise ratio with respect to the other sources. However, Table \ref{tab:em_cfr} shows that for all sources and specifically for source UR, the CBR criterion reconstructs the photometry in a much more accurate way.
\begin{table}
\begin{center}
\begin{tabular}{c|ccccc}
Source position & flux & L2-norm & CBR & L2-norm \% & CBR \% \\ \hline
C & 5.884 &      4.391  &  4.484 &  74.6 & 76.2 \\
L &  4.691  &     3.725  &  4.296 &  79.4 & 91.5 \\     
BR &  4.762   &  3.857 & 4.405  &   80.9 & 92.5\\
UR  &  1.824   &   0.838  &  1.128 & 45.9 & 61.8
\end{tabular}
\end{center}
\caption{Comparison between the photometry of the solutions
provided by EM with the CBR criterion and the L2-norm stopping.
The columns are arranged according to the same scheme of table \ref{tab:isra_cfr}.}
\label{tab:em_cfr}
\end{table}
\begin{figure}
\includegraphics[scale=0.45]{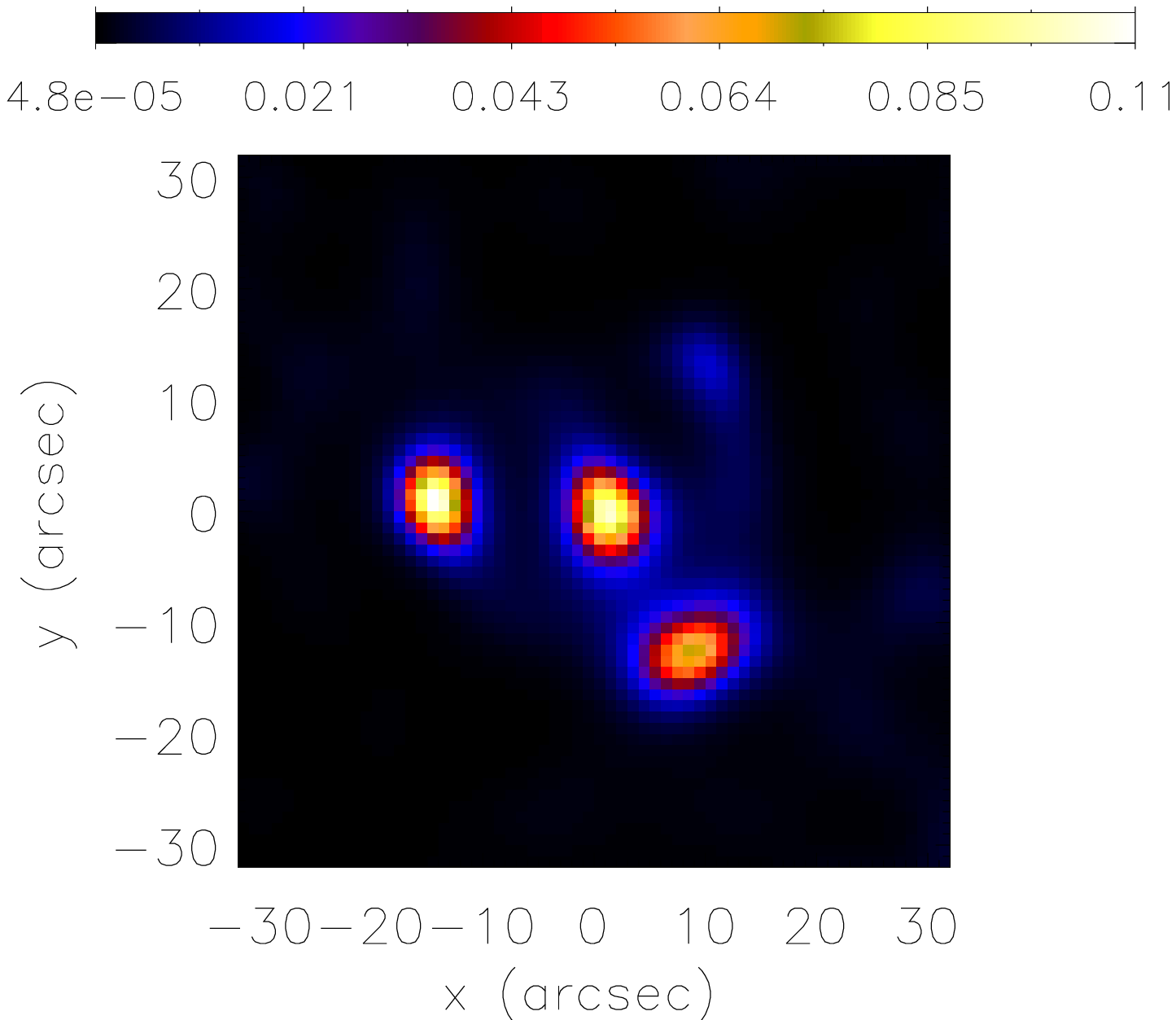}
\includegraphics[scale=0.45]{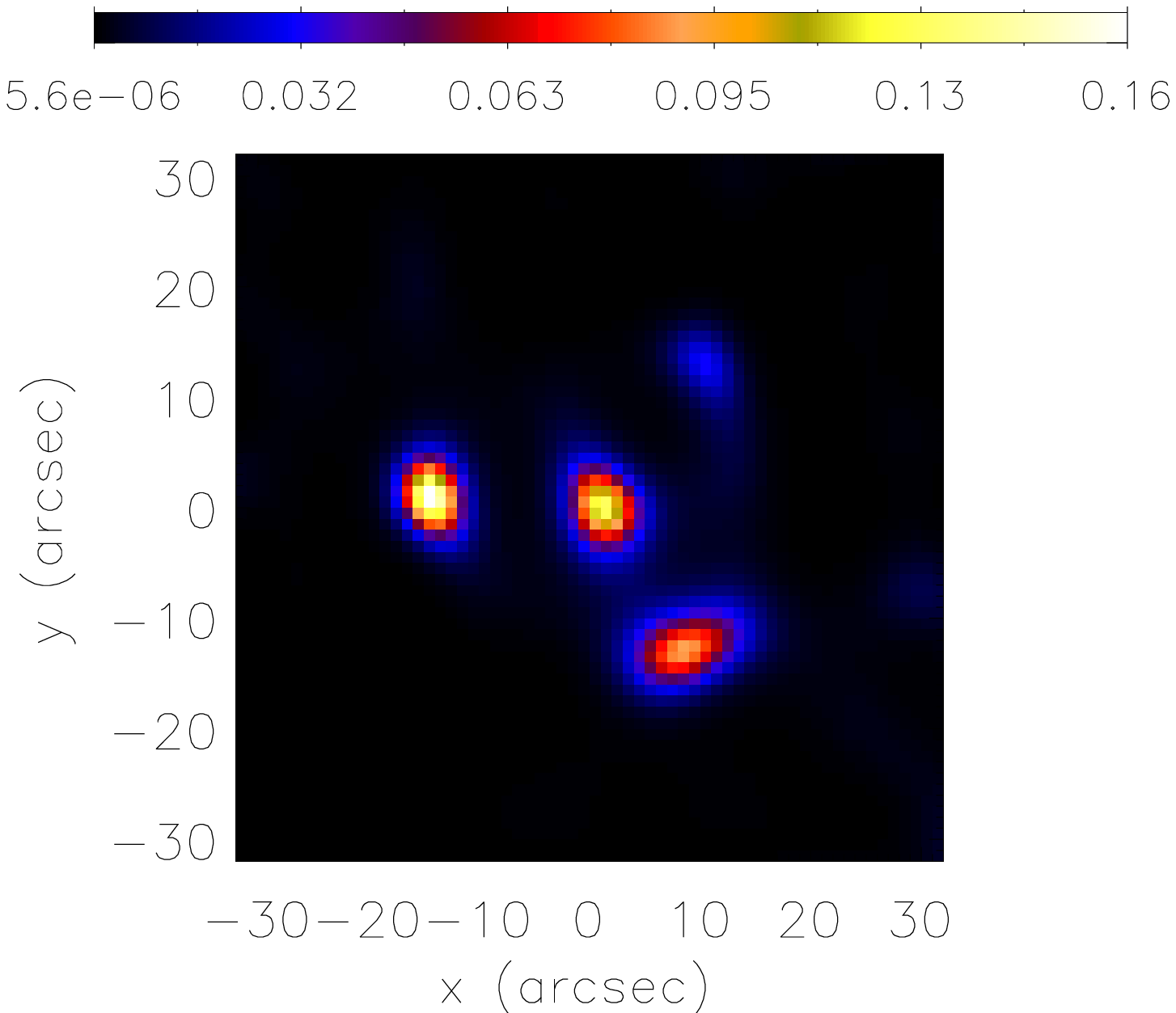}
\caption{From left to right: the reconstruction with EM
at the minimum of the 2-norm (133-th iteration) and the reconstruction with EM
stopped according to the proposed stopping rule (229-th iteration).
}
\label{fig:em_reconstruction}
\end{figure}

\subsection{Real data}

We then studied the behavior of EM regularized by the CBR criterion for the reconstruction of the photon flux map of two real flaring events.
The first event is the September 8 2002 flare in the time interval between 01:38:44 and 01:39:35 UT. The data have been collected by detectors 3 through 8, in the energy range between 25 and 30 keV. The second event is the November 3 2003 flare in the time interval between 01:32:42 and 01:42:25 UT. The data have been collected by detectors 3 through 8, in the energy range between 12 and 25 keV. During the first event the total number of counts collected is about $7.45~10^4$, the number of data is $N = 3816$. During the second event the total number of counts collected is about 
$1.38~10^6$ and the number of data is $N = 3168$. In both cases the reconstructed field of view is a square of $80$ arcseconds side length corresponding to a $64$ by $64$ pixel image.

For these two events EM regularized with the CBR criterion provides the two reconstruction in \Fref{fig:real_case_reconstruction}. In the left panel case, the stopping rule applies after 498 iterations while in the right pane case after 822 iterations. In order to compare the behaviors of the CBR criterion and of the other criteria for Poisson noise described in \Sref{fig:real_case_reconstruction}, in 
\Fref{fig:real_cases_criteria} we computed the discrepancy equations corresponding to the Morozov's discrepancy \eref{EM_Morozov_stop_rule}, Pearson's test \eref{EM_Pearson_stop_rule} and Poisson discrepancy \eref{EM_Poisson_stop_rule}. For five cases over six, the criteria never apply since the left hand sides of the discrepancy equations never intersect the corresponding right hand sides. There is just one case that works (Pearson's test on the reconstruction of the September 08 2002 event) but just after 1000 iterations and the corresponding reconstruction is clearly undersmoothed.
\begin{center}
\begin{figure}
\includegraphics[scale=.9]{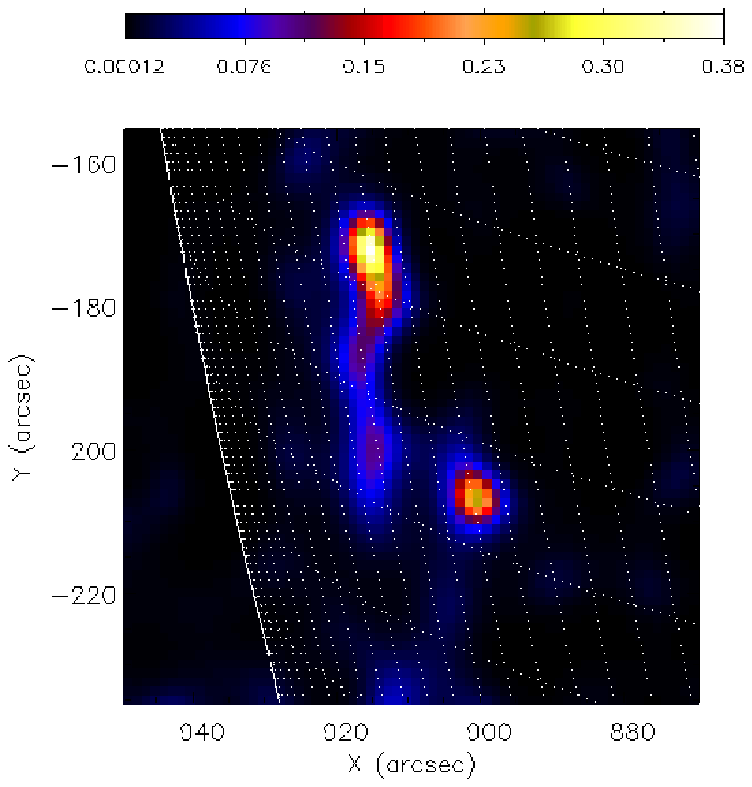}
\includegraphics[scale=.9]{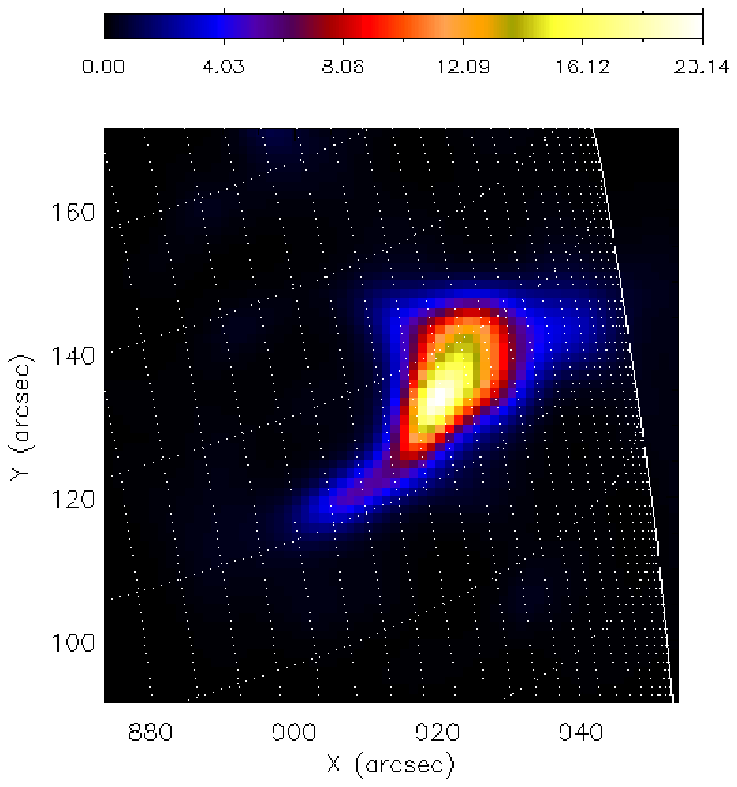}
\caption{Left panel: the reconstruction of the 08 September 2002 event 
performed with EM regularized by the CBR criterion (498 iterations).
Left panel: the reconstruction of the 03 November 2003 event 
performed with EM regularized by the CBR criterion (822 iterations).
In both cases the white grid represents the Sun's surface.}
\label{fig:real_case_reconstruction}
\end{figure}
\begin{figure}
\includegraphics[scale=0.8]{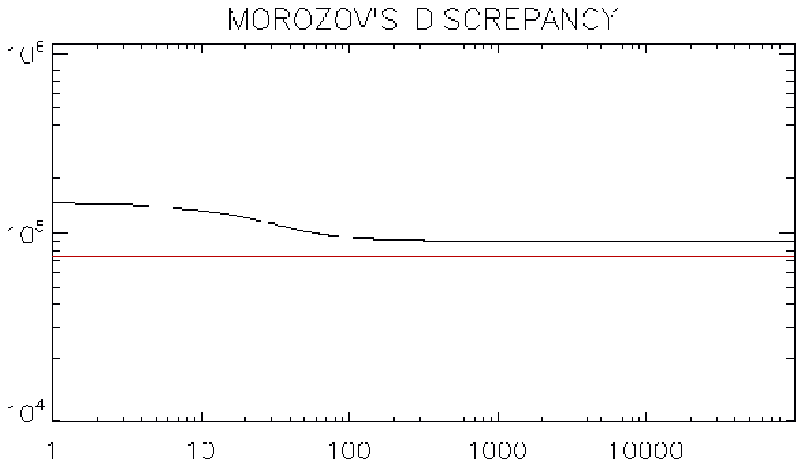}
\includegraphics[scale=0.8]{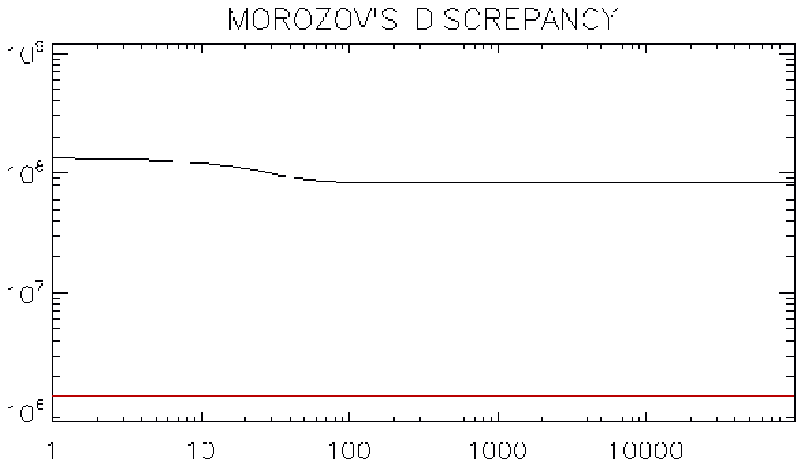}\\
\includegraphics[scale=0.8]{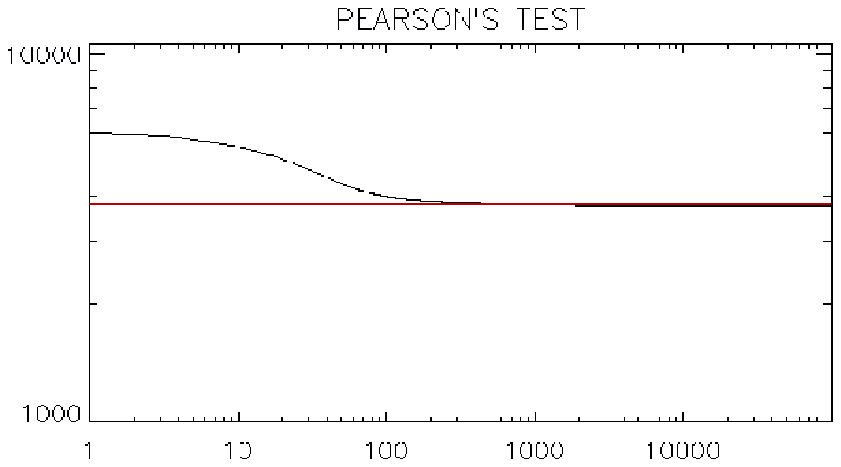}
\includegraphics[scale=0.8]{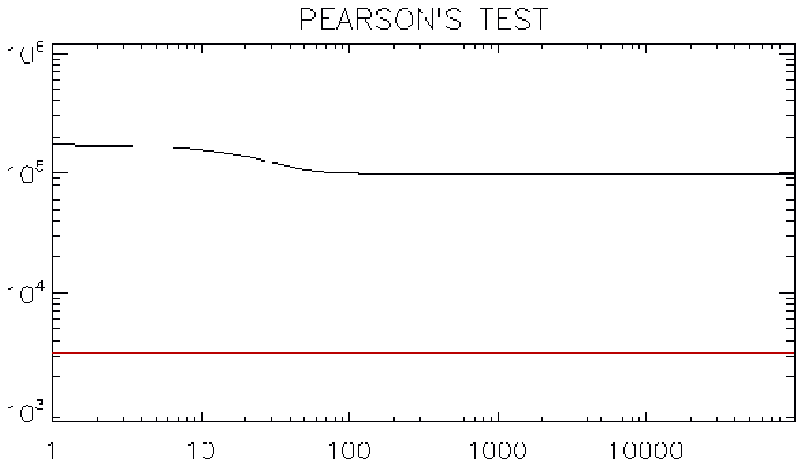}\\
\includegraphics[scale=0.315]{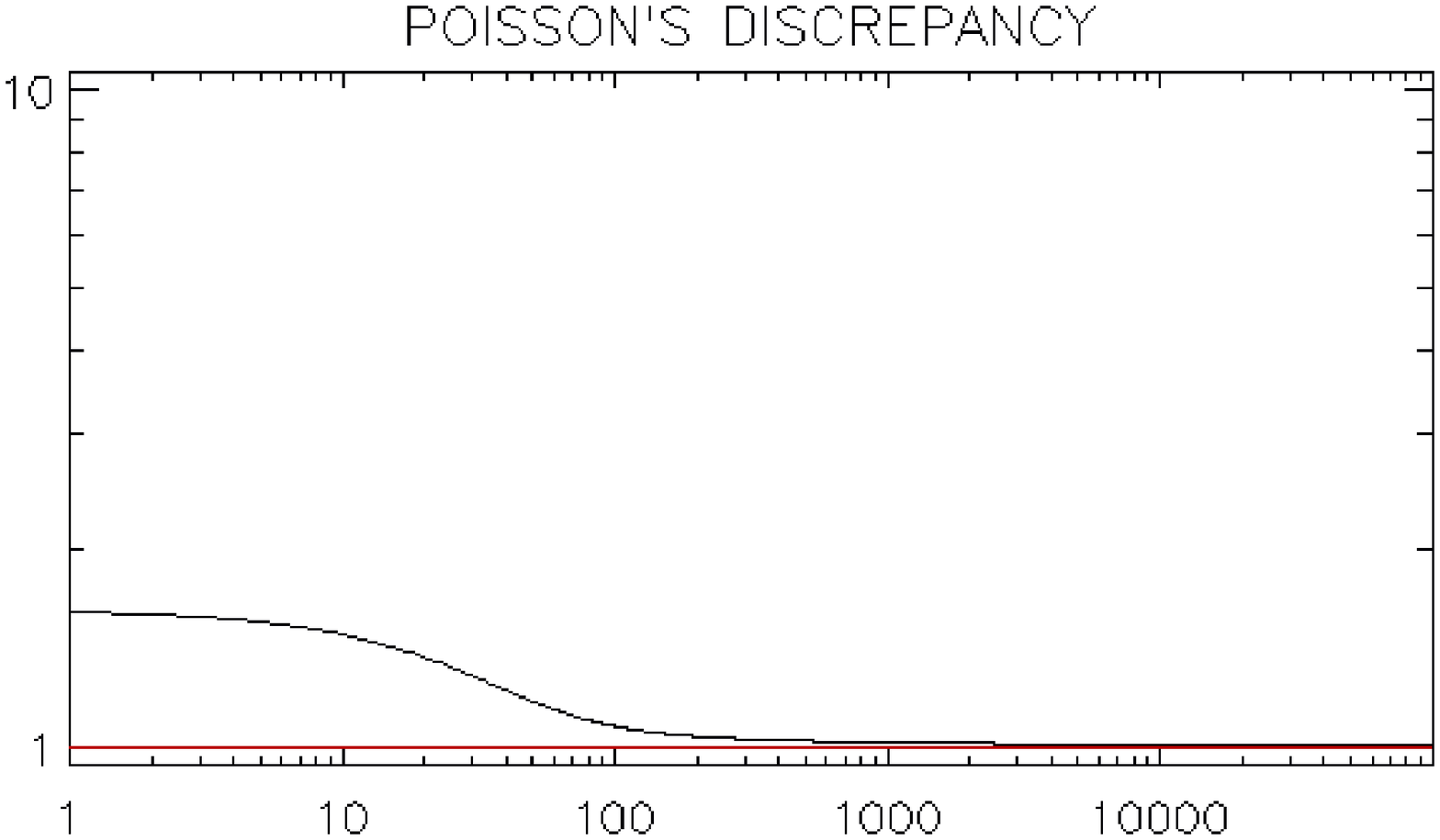}
\includegraphics[scale=0.315]{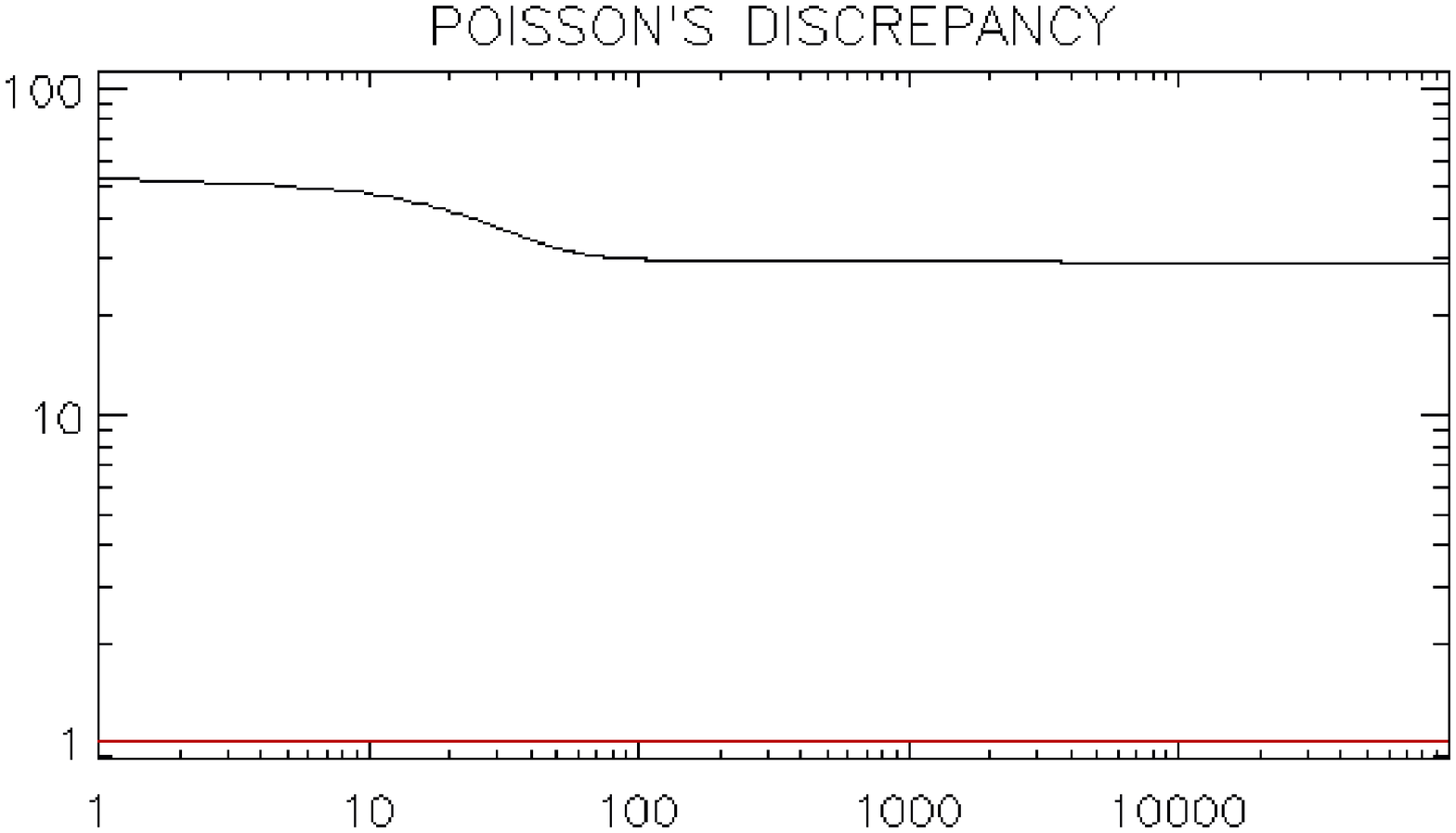}\\
\caption{From top to bottom: the l.h.s. of the classical stopping rules (equations \eref{EM_Morozov_stop_rule}, \eref{EM_Pearson_stop_rule} and \eref{EM_Poisson_stop_rule}) plotted against the number of iterations and visualized in black on a logarithmic scale. The constant r.h.s. values are in red. The first and the second columns show the stopping rules computed on the EM reconstructions of the September 08 2002 event and November 03 2003 events, respectively.}
\label{fig:real_cases_criteria}
\end{figure}
\end{center}

\section{Conclusions}

In the present paper we have formulated a general stopping criterion for constrained ML algorithms. This new criterion, called CBR criterion, is based on the statistical properties of the signal and takes into account the constraint on solution.
We have generalized the Tikhonov definition of regularization
for constrained ML problems and we have proved that ISRA and EM procedures,
equipped with the CBR criterion, are well-defined regularization algorithms.
Moreover, we have also proved that the traditional stopping rules,
applied to any convergent constrained ML algorithm, 
do not define regularization algorithms in the case of genuinely ill-posed problems.

We have illustrated the method first against synthetic count modulation profiles 
simulated in the framework of an X-ray solar mission. Specifically an analysis of photometry in specific region of interest showed the accuracy of this new stopping rule. We also considered two real observations and reconstructed the X-ray sources with the new method, obtaining reliable flaring configurations. Finally, we point out that other stopping rules traditionally applied in the case of Poisson data do not properly work. A systematically validation of this approach is under construction in both astronomical and medical imaging applications, in the case the noise is Poisson, the algorithm adopted is EM and the model equations is genuinely ill-posed.

\section*{Acknowledgments}

The work has been supported by the EU FP7 HESPE grant no. 263086.

\section*{References}

\bibliography{EM_STOPPING_RULE.bib}

\end{document}